\DeclareMathOperator{\F}{\mathbb{F}}
\DeclareMathOperator{\Z}{\mathbb{Z}}
\DeclareMathOperator{\Ext}{Ext}
\DeclareMathOperator{\Id}{Id}
\DeclareMathOperator{\Ker}{Ker}
\DeclareMathOperator{\Res}{Res}
\DeclareMathOperator{\Syl}{Syl}
\DeclareMathOperator{\Tr}{Tr}
\theoremstyle{definition}
\newtheorem{thm}{Theorem}[section]
\newtheorem{prop}[thm]{Proposition}
\newtheorem{lemma}[thm]{Lemma}
\newtheorem*{remark}{Remark}
\newcommand{\lr}[1]{\langle #1 \rangle}
\begin{document}

\title{Computations in Hochschild Cohomology of Group Algebras}
\author{Adam Allan}
\address{Dept. of Mathematics, St. Louis University, St. Louis, MO 63103}
\email{aallan@slu.edu}

\keywords{Hochschild cohomology, product formula, group algebra}

\date{19 October 2011}

\begin{abstract}The Hochschild cohomology ring of a group algebra is an object that has received recent attention, but is difficult to compute, in even the simplest of cases. In this paper, we use the product formula due to Witherspoon and Siegel to extend some of their computations. In particular, we compute the Hochschild cohomology algebra of group algebras $kG$ where $|G| \leq 15$ and we provide an alternative computation of the ring $HH^*(k(E \ltimes P))$ considered by Kessar and Linckelmann.\end{abstract}

\maketitle

\section{Background}

We suppose throughout this paper that $G$ is a finite group and $k$ a field of characteristic $p$. The Hochschild cohomology ring $HH^*(kG)$ is defined as $\Ext_{kG \otimes kG^{\text{op}}}^*(kG,kG)$ where $kG$ has the obvious bimodule structure. Using the Eckmann-Shapiro Lemma, one sees that $HH^*(kG)$ and $H^*(G,{_{\psi}kG})$ are isomorphic as algebras, where $G$ acts on $kG$ via conjugation. Furthermore, $H^*(G,{_{\psi}kG})$ has a well-known additive decomposition. More precisely, let $\{ g_i \}$ be representatives of the conjugacy classes of $G$, $H_i = C_G(g_i)$, and $W_i$ the subspace of $kG$ spanned by all conjugates of $g_i$, so that $W_i \simeq k_{H_i}\uparrow^G$. Since ${_{\psi}kG} = \bigoplus W_i$, the Eckmann-Shapiro Lemma yields a linear isomorphism

$$H^*(G,{_{\psi}kG}) = \bigoplus \Ext_{kG}^*(k,W_i) \simeq \bigoplus \Ext_{kH_i}^*(k,k) = \bigoplus H^*(H_i)$$

These results were known for some time, but it was only in \cite{Holm} and \cite{Cibils} that the case of $HH^*(kG)$ for $G$ an abelian group was fully completed. Using bimodule resolutions of $kG$ as a $G$-module, one may compute $HH^*(kG)$ for several additional classes of groups, including the quaternions. However, an alternative method of computation is provided in \cite{Witherspoon}. There, it is determined how the additive decomposition behaves with respect to the cup product of $H^*(G,{_{\psi}kG})$. In fact, they derive a product formula

\begin{equation}\label{ProductFormula}
\gamma_i(\alpha) \smile \gamma_j(\beta) = \sum_{x \in D} \gamma_k(\Tr_W^{H_k}(\Res^{{^y H_i}}_W y^*\alpha \smile \Res^{{^{yx}H_j}}_W (yx)^*\beta))
\end{equation}

Here, $\gamma_i$ is the composite $H^*(H_i) \rightarrow H^*(H_i,{_{\psi}kG}) \rightarrow H^*(G,{_{\psi}kG})$, $D \subseteq G$ satisfies $G = \coprod_{x \in D} H_i x H_j$, for each $x \in D$ we have $g_k$ conjugate with $g_i{^x g_j}$ with $g_k = {^y(g_i{^xg_j})}$, and $W = {^yH_i} \cap {^{yx}H_j}$. They used this product formula, in particular to find finite presentations of the algebras $HH^*(kG)$ for several small examples and for dihedral 2-groups. The aim of this paper is to extend these computations in several directions.

The paper will proceed as follows. In section 2 we will introduce a modified version of $HH^*(kG)$ that is more convenient for computations but leaves unaltered the fundamental aspects of $HH^*(kG)$ and the product formula. Basically, we will modify the degree zero elements to ignore the redundant information they contain. In section 3 we provide the promised alternative proof to Kessar and Linckelmann's result from \cite{LinckelmannFrobenius} using the product formula. In section 4 we proceed to compute finite presentations for $HH^*(kG)$ where $|G| \leq 15$. Four of these cases were handled by the work performed in \cite{Witherspoon} and so this section is a continuation of the work started there.

\section{Preliminary Results}

There is often times much extraneous information given by $HH^0(kG)$. This occurs when $H_i$ is a $p'$-group, since in this case $H^*(H_i) = k$ concentrated in degree zero. It is therefore useful to consider the Tate analogue $\widehat{HH}^*(kG)$ of Hochschild cohomology. For an algebra $A$, as per \cite{LinckelmannFrobenius} we define

$$\widehat{HH}^*(A) = \bigoplus \overline{\text{Hom}}_{A \otimes A^{\text{op}}}(\Omega^n_{A \otimes A^{\text{op}}}(A),A)$$

where $\overline{\text{Hom}}_{A \otimes A^{\text{op}}}$ denotes the homomorphism space in the stable module category ${_{A \otimes A^{\text{op}}}\overline{\text{mod}}}$, and $\Omega$ denotes the Heller operator. Recall that if $E_{A-A}(A)$ denotes the ring of all $A \otimes A^{\text{op}}$-linear endomorphisms of $A$, then $Z(A) \simeq E_{A-A}(A)$ under the map $z \mapsto (a \mapsto az))$. There is an ideal $Z^{\text{pr}}(A)$ of $Z(A)$ defined as the set of all endomorphisms of $A$ that factor through an $A \otimes A^{\text{op}}$-projective module. Then the stable center $\bar{Z}(A)$ is defined as $Z(A)/Z^{\text{pr}}(A)$. It is easy to see that $\widehat{HH}^n(A) = HH^n(A)$ for $n> 0$ and that $\widehat{HH}^0(A) = \bar{Z}(A)$. Moreover, $\widehat{HH}^*(A)$ satisfies the analogue of Tate duality in the sense that $\widehat{HH}^{-n}(A) \simeq HH^{n-1}(A)^*$ for $n > 0$, where $^*$ denotes the dual space. The next proposition justifies the assertion that $\widehat{HH}^*(kG)$ contains less extraneous information than does $HH^*(kG)$.

\begin{prop}\label{TateZero}Suppose $G$ is a group with conjugacy class representatives $\{ g_i \}$ and corresponding centralizers $H_i = C_G(g_i)$. If $r$ is the number of indices $i$ for which $p \mid |H_i|$, then $\dim \widehat{HH}^0(kG) = r$. Moreover, $\bar{Z}^{\text{pr}}(kG) = \Tr_1^G(kG)$.\end{prop}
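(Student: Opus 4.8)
The plan is to compute $Z^{\text{pr}}(kG)$ explicitly and then deduce both assertions. Throughout I use the identifications recalled above: $\widehat{HH}^0(kG)=\bar Z(kG)=Z(kG)/Z^{\text{pr}}(kG)$, and $Z(kG)\simeq \End_{kG\otimes kG^{\text{op}}}(kG)$ via $z\mapsto(a\mapsto az)$, whose inverse sends a $kG\otimes kG^{\text{op}}$-linear endomorphism $f$ to $f(1)\in Z(kG)$ (and then $f(a)=a\,f(1)$ for all $a\in kG$). Granting the asserted equality $Z^{\text{pr}}(kG)=\Tr_1^G(kG)$, the first assertion becomes a dimension count, since $\dim_k Z(kG)$ is the number of conjugacy classes of $G$ and hence $\dim_k\widehat{HH}^0(kG)=\dim_k Z(kG)-\dim_k\Tr_1^G(kG)$. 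So there are two things to prove: that $Z^{\text{pr}}(kG)=\Tr_1^G(kG)$, and that $\dim_k\Tr_1^G(kG)$ equals the number of $i$ with $p\nmid|H_i|$.

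For the first point, write $A=kG$ and view $A$ as a $k[G\times G]$-module via $(g,h)\cdot a=gah^{-1}$, which matches the $A\otimes A^{\text{op}}$-structure (the group element $h$ of the second factor acts through $h^{-1}$, since $G\to(kG^{\text{op}})^{\times}$, $h\mapsto h^{-1}$, is a homomorphism). I would then invoke the standard description of the morphisms that factor through a projective over a group algebra: a $kG\otimes kG^{\text{op}}$-linear endomorphism of $A$ factors through a projective module iff it equals $\Tr_1^{G\times G}(\phi)$ for some $\phi\in\Hom_k(A,A)$, where $G\times G$ acts on $\Hom_k(A,A)$ by conjugation and
\[
\Tr_1^{G\times G}(\phi)(a)=\sum_{(g,h)\in G\times G} g\,\phi(g^{-1}ah)\,h^{-1}.
\]
Under the identification above this says exactly that $Z^{\text{pr}}(kG)=\{\,\Tr_1^{G\times G}(\phi)(1):\phi\in\Hom_k(kG,kG)\,\}$. (This standard fact is proved by noting that every projective is a summand of a free module $k[G\times G]\otimes_k V$, whose identity map equals $\Tr_1^{G\times G}$ of the $k$-linear projection onto $1\otimes V$, and that $\Tr_1$ commutes with pre- and post-composition by $k[G\times G]$-linear maps.)

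It remains to collapse $\Tr_1^{G\times G}$ to $\Tr_1^G$. Evaluating at $a=1$ and reindexing the sum by $u=g^{-1}h$ (so $h=gu$, $h^{-1}=u^{-1}g^{-1}$, and $\phi(g^{-1}h)=\phi(u)$) gives
\[
\Tr_1^{G\times G}(\phi)(1)=\sum_{g,h}g\,\phi(g^{-1}h)\,h^{-1}=\sum_{u\in G}\sum_{g\in G}g\,\bigl(\phi(u)u^{-1}\bigr)\,g^{-1}=\Tr_1^G\Bigl(\sum_{u\in G}\phi(u)u^{-1}\Bigr).
\]
Since $\sum_{u}\phi(u)u^{-1}$ ranges over all of $kG$ as $\phi$ ranges over $\Hom_k(kG,kG)$ (take $\phi$ supported at the identity, with $\phi(1)$ arbitrary), this yields $Z^{\text{pr}}(kG)=\Tr_1^G(kG)$. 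For the dimension, $\Tr_1^G(x)=\sum_{g\in G}gxg^{-1}=|C_G(x)|\,\widehat{x}$ for $x\in G$, where $\widehat{x}$ is the sum of the conjugates of $x$; as the class sums $\widehat{g_i}$ form a $k$-basis of $Z(kG)$, the subspace $\Tr_1^G(kG)$ has $k$-basis $\{\widehat{g_i}: |H_i|\neq 0 \text{ in }k\}=\{\widehat{g_i}:p\nmid|H_i|\}$. Subtracting from $\dim_k Z(kG)$, we get that $\dim_k\widehat{HH}^0(kG)$ equals the number of $i$ with $p\mid|H_i|$, namely $r$.

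The only step that is not bookkeeping is the identification, in the second paragraph, of ``factors through a projective'' with the image of $\Tr_1^{G\times G}$; granting that, the rest is just the reindexing above, whose one delicate point is to keep the $A$ versus $A^{\text{op}}$ (left versus right, hence the $h^{-1}$) conventions straight so that the transfer over $G\times G$ genuinely collapses to the conjugation transfer over $G$.
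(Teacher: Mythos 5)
Your proof is correct, but it takes a genuinely different route from the paper's. The paper works with the specific projective cover $\psi : k[G\times G] \twoheadrightarrow kG$, $(g,h)\mapsto gh^{-1}$: since a bimodule endomorphism factors through a projective iff it factors through the projective cover, the paper writes down the general $k[G\times G]$-linear map $\sigma : kG \to k[G\times G]$ (via Frobenius reciprocity for $kG \simeq k_{\Delta G}\uparrow^{G\times G}$) and solves the resulting linear system to decide, class sum by class sum, which central elements lift; the identity $Z^{\text{pr}}(kG)=\Tr_1^G(kG)$ is then read off at the end as a byproduct. You instead invoke the standard characterization of maps factoring through projectives as the image of $\Tr_1^{G\times G}$ on $\Hom_k(kG,kG)$ (Higman's criterion for the trivial subgroup) and collapse the double transfer to a single conjugation transfer via the substitution $u=g^{-1}h$; this yields $Z^{\text{pr}}(kG)=\Tr_1^G(kG)$ directly and makes the dimension count an immediate corollary, since $\Tr_1^G(x)=|C_G(x)|\,\widehat{x}$ vanishes exactly on the classes with $p\mid |H_i|$. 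The two arguments compute the same thing --- your $\phi$ restricted to group elements plays the role of the paper's coefficients $\sigma_{g,1}$ --- but yours leans on a general lemma where the paper is self-contained and explicit, and yours delivers the trace description as the primary conclusion rather than as an afterthought. The one fact you defer to the literature (projective homomorphisms are exactly the $\Tr_1$-transfers) is genuinely standard, and your parenthetical sketch of it is accurate, so there is no gap.
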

\begin{proof}
It suffices to show that $\dim \bar{Z}(kG) = r$. Recall that $kG \otimes kG^{\text{op}} \simeq k[G \times G]$ with $kG$ a $k[G \times G]$-module via the rule $(g,h)j = gjh^{-1}$. Notice that the map $\psi : k[G \times G] \twoheadrightarrow kG$ given by $\psi(g,h) = gh^{-1}$ is $k[G \times G]$-linear since
$\psi((g_1,h_1)(g_2,h_2)) = g_1g_2h_2^{-1}h_1^{-1} = (g_1,h_1).\psi(g_2,h_2)$. So $\psi$ is a projective cover, and hence an endomorphism of $kG$ factors through a projective $k[G \times G]$-module precisely when it factors through $\psi$.

Now suppose $z \in Z(kG)$ and define an endomorphism $f_z$ of $kG$ via $f_z(x) = xz = zx$ for $x \in kG$. We aim to find a necessary and sufficient condition that $f_z$ factor through $\psi$, in the sense that $f_z = \psi\sigma$ for some $\sigma : kG \rightarrow k[G \times G]$. Since $kG \simeq k_{\Delta G}\uparrow^{G \times G}$ as $k[G \times G]$-modules, we know that for every $k\Delta G$-linear map $k \rightarrow k[G \times G]$ there is a unique $k[G \times G]$-linear extension $kG \rightarrow k[G \times G]$. So $\sigma$ is specified by $\sigma(1) = \sum_{(h,j)} \sigma_{(h,j)} (h,j)$ subject to $(g,g)\sigma(1) = \sigma(1)$ for all $g \in G$. In other words, $\sigma_{h,j} = \sigma_{gh,gj}$ for all $g,h,j \in G$. In particular, $\sigma$ is determined by $\{ \sigma_{g,1} : g \in G \}$ since $\sigma_{h,j} = \sigma_{j^{-1}h,1}$. From $f_z(1) = \psi(\sigma(1))$ we obtain $z = \sum \sigma_{h,j} hj^{-1}$. Let $\{g_i\}$ be conjugacy class representatives of $G$ and for each $g_i$ let $S_i = \{ {^g g_i} : g \in G \}$. Define $\kappa_i = \sum_{g \in S_i} g$ and write $z = \sum z_i \kappa_i$ for scalars $z_i \in k$. Then

\begin{equation}\label{zi}
z_i = \sum_{j \in G} \sigma_{g_i j,j} = \sum_{j \in G} \sigma_{j^{-1}g_i j,1} = |C_G(g_i)| \sum_{g \in S_i} \sigma_{g,1}
\end{equation}

So a necessary and sufficient condition for $f_z$ to factor through $\psi$ is the existence of solutions to (\ref{zi}) for all $i$. If (\ref{zi}) is a consistent system and $i$ is such that $p \mid |H_i|$, then $z_i = 0$, so that $z$ is a combination of all $\kappa_i$ with $p \nmid |H_i|$. Conversely, if $z$ is of this form then we may find a solution to (\ref{zi}) by setting, for example, $\sigma_{g,1} = 0$ if $p \mid |H_i|$ and $\sigma_{g,1} = z_i/|H_i|$ if $p \nmid |H_i|$. Therefore

$$\dim Z^{\text{pr}}(kG) = |\{ i : p \nmid |H_i| \}| = \dim Z(kG) - r$$

and so $\dim \bar{Z}(kG) = \dim Z(kG) - \dim Z^{\text{pr}}(kG) = r$ as claimed. The equality $\bar{Z}^{\text{pr}}(kG) = \Tr_1^G(kG)$ follows from the fact that $\Tr_1^G(g_i) = 0$ if $p \mid |C_G(g_i)|$ while $\Tr_1^G(\frac{1}{|C_G(g_i)|}g_i) = \kappa_i$ if $p \nmid |C_G(g_i)|$.
\end{proof}

Notice that $\widehat{HH}^*(kG) = \bigoplus \widehat{H}^*(H_i)$ as graded vector spaces where $\hat{H}^*(H_i)$ is the Tate cohomology of $H_i$, and that (\ref{ProductFormula}) still makes sense since restriction and trace are defined for Tate cohomology of groups. It is not clear whether the product formula (\ref{ProductFormula}) remains valid in this case, but even if it did, there is good reason to avoid computing products in negative degrees since these products may be complicated. Our original goal in introducing $\widehat{HH}^*(kG)$ was simply in discarding the extraneous information present in degree zero, and so we are justified in merely considering the subalgebra $HH'(kG) := \bigoplus_{n \geq 0} \widehat{HH}^n(kG)$. Note that $Z^{\text{pr}}(kG)$ is an ideal in $Z(kG)$ and $HH^*(kG)$ as well. After all, if $\alpha \in H^*(H_i)$ and $\beta \in H^*(H_j)$ with $\deg \beta > 0$ and $p \nmid |H_i|$, then $H^{\deg \beta}(W) = 0$ in the notation of (\ref{ProductFormula}), so that $\gamma_i(\alpha) \smile \gamma_j(\beta) = 0$. So $Z^{\text{pr}}(kG)$ is an ideal in $HH^*(kG)$, $HH'(kG) = HH^*(kG)/Z^{\text{pr}}(kG)$, and hence formula (\ref{ProductFormula}) remains valid for $HH'(kG)$.

\begin{remark}One of the original motivations for our computations was to investigate a conjecture formulated in \cite{Witherspoon} regarding a relationship between $HH^*(kG)$ and $H^*(G)$. More precisely, let $e_0$ be the principal block idempotent of $kG$ with $b_0 = e_0kG$, and recall that $HH^*(b_0)$ is an ideal direct summand of $HH^*(kG)$ with $HH^*(b_0) = e_0HH^*(kG)$. Then the composite map $H^*(G) \rightarrow HH^*(kG) \twoheadrightarrow HH^*(b_0)$ is a monomorphism with retraction $HH^*(b_0) \hookrightarrow HH^*(kG) \rightarrow H^*(G)$, where the map $HH^*(kG) = H^*(G,kG) \rightarrow H^*(G)$ is induced by $\varepsilon : kG \rightarrow k$. It was conjectured that the composite is an isomorphism module radicals; a result that was later established by Linckelmann in \cite{Linckelmann}. In this regard, our modification $HH'(kG)$ is harmless since the monomorphism $H^*(G) \hookrightarrow HH'(kG)$ remains an isomorphism modulo radicals. After all, the only thing to check is that $Z(b_0)/Z^{\text{pr}}(b)$ and $k$ are isomorphic modulo radicals, which is true since $J(Z(b_0))$ has codimension 1 in $Z(b_0)$.
\end{remark}

The next two results will prove indispensable, where the first is a standard application of the LHS.

\begin{lemma}\label{LHS}Suppose $G$ is a group with subgroup $N \triangleleft G$ and $k$ is a field of characteristic $p$. If $p \nmid |N|$ then $H^*(G) \simeq H^*(G/N)$. If $p \nmid |G:N|$ then $\Res : H^*(G) \simeq H^*(N)^G$ is an isomorphism with inverse $\frac{1}{|G:N|}\Tr_N^G$. Moreover, if we identify $H^*(G)$ with $H^*(N)^G$ and let $\alpha \in H^*(N)$, then $\Tr_N^G(\alpha) = \sum_{g \in G/N} g^*\alpha$.\end{lemma}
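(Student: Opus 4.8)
The statement collects three standard facts about group cohomology, and the plan is to prove each in turn by appealing to the Lyndon–Hochschild–Serre (LHS) spectral sequence $E_2^{s,t} = H^s(G/N, H^t(N)) \Rightarrow H^{s+t}(G)$. For the first assertion, suppose $p \nmid |N|$. Then $H^t(N) = 0$ for $t > 0$ since $kN$ is semisimple, so the spectral sequence collapses to the edge $E_2^{s,0} = H^s(G/N, k)$, giving the inflation isomorphism $H^*(G/N) \simeq H^*(G)$. This is the easy case and requires no real work beyond citing the collapse.

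For the second assertion, suppose $p \nmid |G:N|$. Now it is $H^s(G/N, -)$ that vanishes for $s > 0$, because $|G/N|$ is invertible in $k$, so again the spectral sequence collapses, this time to the edge $E_2^{0,t} = H^t(N)^{G/N} = H^t(N)^G$, and the edge map is exactly restriction. So $\Res: H^*(G) \simeq H^*(N)^G$. To identify the inverse as $\frac{1}{|G:N|}\Tr_N^G$, one uses the standard transitivity/double-coset identity $\Res_N^G \circ \Tr_N^G = \sum_{g \in N \backslash G / N} \Tr_{N \cap {}^gN}^N \circ {}^g(-) \circ \Res_{N \cap {}^gN}^N$; since $N$ is normal this degenerates to $\Res_N^G \Tr_N^G(\alpha) = \sum_{g \in G/N} {}^g\alpha$, and for $\alpha \in H^*(N)^G$ every summand equals $\alpha$, giving $\Res \Tr_N^G = |G:N|\cdot \mathrm{id}$. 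Composing with the isomorphism $\Res$ (and using that $\Tr_N^G\Res = |G:N|$ likewise, or simply that a one-sided inverse of an isomorphism is the inverse) yields $\frac{1}{|G:N|}\Tr_N^G = \Res^{-1}$.

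The third assertion is then almost immediate: under the identification $H^*(G) = H^*(N)^G$ via $\Res$, an element of $H^*(G)$ corresponding to $\alpha \in H^*(N)^G$ is $\Res^{-1}(\alpha) = \frac{1}{|G:N|}\Tr_N^G(\alpha)$, and by the displayed double-coset computation $\Tr_N^G(\alpha) = \sum_{g \in G/N} {}^g\alpha = \sum_{g \in G/N} g^*\alpha$. But I should be careful about what "$\Tr_N^G(\alpha)$ for $\alpha \in H^*(N)$ not necessarily $G$-stable" means in the statement: the cleanest reading is that $\Tr_N^G\colon H^*(N)\to H^*(G)=H^*(N)^G$ is computed by $\alpha \mapsto \sum_{g\in G/N} g^*\alpha$ for \emph{any} $\alpha$, which is precisely the restriction-followed-by-transitivity identity above with no stability hypothesis needed. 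So the third claim is really just recording that double-coset formula in the normal case.

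The only mild obstacle is bookkeeping about the two edge maps and checking that the composite $\Res \circ (\frac{1}{|G:N|}\Tr_N^G)$ and its reverse are both the identity, i.e. that $\frac{1}{|G:N|}\Tr$ is a genuine two-sided inverse rather than merely a section; this follows from $\Tr_N^G \Res_N^G = |G:N|\cdot\mathrm{id}_{H^*(G)}$, which is the classical fact that transfer-then-restriction is multiplication by the index, valid with no coprimality hypothesis. Everything else is a direct invocation of the LHS spectral sequence together with the standard functoriality of transfer.
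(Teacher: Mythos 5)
Your proposal is correct and follows exactly the route the paper intends: the paper gives no written proof of this lemma, remarking only that it is ``a standard application of the LHS,'' and your argument is precisely that standard application (collapse of the Lyndon--Hochschild--Serre spectral sequence in each coprimality case, plus the Mackey double-coset formula for $\Res^G_N \Tr_N^G$ in the normal case). The only cosmetic quibble is the indexing of the restriction term in your double-coset identity, which is harmless here since $N$ is normal.
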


\begin{lemma}\label{ResComp}Suppose $G$ has a Sylow $p$-subgroup $P$ and a normal $p$-complement $Q$. If $k$ is a field with characteristic $p$, then $\Res : H^*(G) \rightarrow H^*(P)$ is an isomorphism.\end{lemma}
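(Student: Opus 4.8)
The plan is to read this off from Lemma~\ref{LHS} together with the functoriality of group cohomology. Since $Q$ is a normal $p$-complement, $|Q|$ is prime to $p$ and $[G:Q]$ is a power of $p$; a comparison of orders then gives $Q \cap P = 1$ and $QP = G$, so that $G = Q \rtimes P$ and the quotient map restricts to an isomorphism $\theta : P \xrightarrow{\ \sim\ } G/Q$. First I would apply the first part of Lemma~\ref{LHS} with $N = Q$: since $p \nmid |Q|$, the inflation map $\mathrm{inf} : H^*(G/Q) \to H^*(G)$ is an isomorphism. (This is how the isomorphism of that lemma is realized, the Lyndon--Hochschild--Serre spectral sequence of $1 \to Q \to G \to G/Q \to 1$ collapsing because $H^{>0}(Q) = 0$ over $k$.)

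Next I would chase the commutative triangle built from the inclusion $P \hookrightarrow G$, the projection $\pi : G \twoheadrightarrow G/Q$, and $\theta = \pi|_P$. Applying $H^*(-)$ and using contravariance, the composite
$$H^*(G/Q) \xrightarrow{\ \mathrm{inf}\ } H^*(G) \xrightarrow{\ \Res^G_P\ } H^*(P)$$
coincides with $\theta^* : H^*(G/Q) \xrightarrow{\ \sim\ } H^*(P)$, which is an isomorphism. Since the first arrow is an isomorphism by the previous paragraph, $\Res^G_P$ is an isomorphism, as desired.

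I do not expect a genuine obstacle here; the only point needing (standard) care is that the abstract isomorphism $H^*(G) \simeq H^*(G/Q)$ of Lemma~\ref{LHS} is the inflation map, so that it is compatible with restriction under the functoriality of $H^*$. As a sanity check one may also note the transfer identity $\Tr_P^G \circ \Res^G_P = [G:P]\cdot\mathrm{id} = |Q|\cdot\mathrm{id}$ on $H^*(G)$, which already forces $\Res^G_P$ to be injective since $|Q|$ is invertible in $k$; surjectivity then follows from the inflation argument above, or, in each degree, from a dimension count using the collapsed spectral sequence.
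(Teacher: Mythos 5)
Your argument is correct, and it takes a different route from the paper's. The paper proves this by passing to the principal block: with $e = \frac{1}{|Q|}\sum_{q \in Q} q$, one has $ekG \simeq kP$, whence $H^*(G) = \Ext^*_{ekG}(k,k) \simeq H^*(P)$ abstractly; the map $\Res^G_P$ is then identified as an isomorphism by combining this degreewise equality of dimensions with the injectivity of restriction to a Sylow subgroup (your ``sanity check'' via $\Tr_P^G \circ \Res^G_P = |Q|\cdot\mathrm{id}$). Your proof instead works entirely on the group-cohomology side: the collapse of the Lyndon--Hochschild--Serre spectral sequence for $1 \to Q \to G \to G/Q \to 1$ makes inflation an isomorphism, and the factorization $\Res^G_P \circ \mathrm{inf} = \theta^*$ through the isomorphism $\theta : P \to G/Q$ exhibits $\Res^G_P$ directly as a composite of isomorphisms, with no separate injectivity step or dimension count needed. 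The one point you rightly flag --- that the isomorphism of Lemma~\ref{LHS} for $p \nmid |N|$ is realized by inflation, so that it interacts correctly with restriction under functoriality --- is exactly where the care is required, and it is standard. The paper's block-theoretic argument has the side benefit of recording that the principal block of $kG$ is isomorphic to $kP$, which is used elsewhere in the paper (e.g.\ in the remark on the map $HH^*(b_0) \to H^*(G)$), but as a proof of this lemma alone your version is self-contained and arguably more direct.
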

\begin{proof}
If $e = \frac{1}{|Q|}\sum_{q \in Q} q$ so that $ekG$ is the principal block of $kG$. Then $H^*(kG) \simeq \Ext_{ekG}^*(k,k) \simeq H^*(P)$ since $ekG \simeq kP$. Since $\Res : H^*(G) \rightarrow H^*(P)$ is injective, we see that $\Res$ is an isomorphism.
\end{proof}

\section{Linckelmann's Result}

We now use the product formula to establish a modified form of Proposition 5.2 from \cite{LinckelmannFrobenius}.

\begin{thm}\label{Linckelmann}Suppose $P$ is a nontrivial abelian $p$-group and $E$ is a $p'$-subgroup of $\text{Aut}(P)$ that acts semiregularly on $P \setminus \{1\}$. Then $E$ acts on the algebra $kP \otimes H^*(P)$ diagonally via automorphisms, and there is an isomorphism of $k$-algebras

$$HH'(k(E \ltimes P)) \simeq (kP \otimes H^*(P))^E$$

\end{thm}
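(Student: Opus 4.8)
The plan is to match the two graded algebras summand by summand and then to verify compatibility with products by direct appeal to the formula (\ref{ProductFormula}).

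I would begin with the conjugacy structure of $G = E \ltimes P$. Every $p$-element of $G$ lies in the normal Sylow $p$-subgroup $P$, so a conjugacy class has centralizer of order divisible by $p$ if and only if it meets $P$, and the classes meeting $P$ are precisely the $E$-orbits on $P$. Semiregularity gives $C_G(1) = G$ and $C_G(u) = P$ for every $1 \neq u \in P$ (an element $ev$ with $e \in E$, $v \in P$ conjugates $u$ to $e(u)$, so it centralizes $u$ only if $e \in C_E(u) = \{1\}$), and it also shows that no nontrivial element of $P$ centralizes an element outside $P$, so the remaining classes have $p'$-centralizers. By Proposition \ref{TateZero} these latter classes contribute nothing to $HH'(kG) = HH^*(kG)/Z^{\text{pr}}(kG)$, so as a graded vector space $HH'(kG) = \gamma_0(H^*(G)) \oplus \bigoplus_{i \geq 1} \gamma_i(H^*(P))$, where $g_0 = 1$ and $\{g_i\}_{i \geq 1}$ is a set of representatives of the nontrivial $E$-orbits on $P$. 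On the other side, writing $kP = \bigoplus_{u \in P} ku$ and grouping by $E$-orbits, and using that the orbit of $1$ has stabilizer $E$ while every other orbit is free, one obtains $(kP \otimes H^*(P))^E = \bigl(1 \otimes H^*(P)^E\bigr) \oplus \bigoplus_{i \geq 1} \bigl\{\sum_{e \in E} e(g_i) \otimes e(\beta) : \beta \in H^*(P)\bigr\}$, and comparing dimensions in each degree shows the two sides agree additively.

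Next I would record the cohomological input. Since $P \triangleleft G$ with $p \nmid |G:P|$, Lemma \ref{LHS} gives an algebra isomorphism $\Res^G_P : H^*(G) \simeq H^*(P)^E$ with inverse $\tfrac{1}{|E|}\Tr^G_P$ and with $\Res^G_P \Tr^G_P(\gamma) = \sum_{e \in E} e(\gamma)$. I then define $\Theta : HH'(kG) \to (kP \otimes H^*(P))^E$ by $\gamma_0(\alpha) \mapsto 1 \otimes \Res^G_P(\alpha)$ and $\gamma_i(\beta) \mapsto \sum_{e \in E} e(g_i) \otimes e(\beta)$ for $i \geq 1$. Because the summands of the two sides are supported on pairwise disjoint $E$-orbits of $P$ and $\Theta$ carries each summand isomorphically onto the corresponding one, $\Theta$ is a graded linear isomorphism, independent of the chosen orbit representatives up to the usual conjugation identifications. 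It is also immediate that $E$ acts on $kP \otimes H^*(P)$ diagonally by graded $k$-algebra automorphisms --- the tensor of the permutation action on $kP$ with the functorial action on $H^*(P)$ --- so the target is genuinely a graded $k$-subalgebra.

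The substance of the proof is multiplicativity of $\Theta$, which (recall that (\ref{ProductFormula}) is valid for $HH'(kG)$) I would check on the generators $\gamma_i(\alpha)$ in the cases $\gamma_0 \smile \gamma_0$, $\gamma_0 \smile \gamma_j$, and $\gamma_i \smile \gamma_j$ with $i,j \geq 1$, the remaining orders following by graded-commutativity of both algebras. In each case the double coset space in (\ref{ProductFormula}) collapses --- it is a single coset whenever one of the two centralizers is $G$, and it is $P \backslash G / P \cong E$ when both are $P$ --- and the intersection $W = {}^y H_i \cap {}^{yx} H_j$ always equals $P$ because $P \triangleleft G$. One gets $\gamma_0(\alpha) \smile \gamma_0(\beta) = \gamma_0(\alpha \smile \beta)$, which $\Theta$ sends to $(1 \otimes \Res^G_P\alpha)(1 \otimes \Res^G_P\beta)$ since $\Res^G_P$ is a ring map; $\gamma_0(\alpha) \smile \gamma_j(\beta) = \gamma_j(\Res^G_P\alpha \smile \beta)$, which corresponds on the target to $(1 \otimes \Res^G_P\alpha)\sum_e e(g_j) \otimes e(\beta)$ because $\Res^G_P\alpha$ is $E$-fixed; and, taking the double coset representatives $x$ to be the elements of $E \subseteq G$, the term of $\gamma_i(\alpha) \smile \gamma_j(\beta)$ indexed by $e \in E$ equals $\gamma_0(\Tr^G_P(\alpha \smile e(\beta)))$ if $g_i \cdot e(g_j) = 1$ and $\gamma_{m(e)}(y_e(\alpha \smile e(\beta)))$ otherwise, where $g_{m(e)} = y_e(g_i \cdot e(g_j))$ for the unique $y_e \in E$ taking $g_i \cdot e(g_j)$ to an orbit representative. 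On the target, expanding the product of $\sum_{e_1} e_1(g_i) \otimes e_1(\alpha)$ and $\sum_{e_2} e_2(g_j) \otimes e_2(\beta)$ and substituting $e_2 = e_1 e$ regroups it as $\sum_{e \in E} \sum_{e_1 \in E} e_1(g_i \cdot e(g_j)) \otimes e_1(\alpha \cdot e(\beta))$, and matching this with $\Theta$ of the above --- using $\Res^G_P \Tr^G_P(\gamma) = \sum_{e_1} e_1(\gamma)$ in the degenerate case and reindexing $e_1 \mapsto e_1 y_e$ in the generic case --- yields equality. The step I expect to be the main obstacle is not any individual computation but keeping the conjugation conventions consistent throughout this last case: relating ${}^x g_j$ to the automorphism action of $E$ on $P$, relating $y^*\alpha$ and $(yx)^*\beta$ to the functorial action of $E$ on $H^*(P)$ (including whether $y$ or $y^{-1}$ appears, which may require choosing that same convention in the definition of $\Theta$), and checking that reindexing by double coset representatives and by the $y_e$ transforms the $kP$-factor and the $H^*(P)$-factor in the same way. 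Once multiplicativity holds on generators it holds everywhere, and as $\Theta$ is in addition a graded linear isomorphism, it is an isomorphism of graded $k$-algebras, which is the claim.
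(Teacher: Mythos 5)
Your proposal is correct and follows essentially the same route as the paper: the same additive decomposition of $HH'(kG)$ into $H^*(P)^E$ plus one copy of $H^*(P)$ per nontrivial $E$-orbit on $P$, the same map (your $\Theta$ is the paper's $\Psi$, with the identification $H^*(G)\simeq H^*(P)^E$ made explicit via $\Res^G_P$), and the same multiplicativity check via the product formula with $D=E$, $W=P$, the split into the cases $g_i\cdot e(g_j)=1$ and $\neq 1$, and the reindexing $(e_1,e_2)=(f,fe)$. The convention-matching issue you flag in the last step is real but resolves exactly as you anticipate.
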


\begin{proof}
Define $G = E \ltimes P$ and note that if $H \leq G$ then $H \cap P$ is a Sylow $p$-subgroup of $H$, and hence $\Res : H^*(H) \rightarrow H^*(P)^{E_H}$ is an isomorphism by Lemma \ref{LHS}, where $E_H$ is a $p$-complement in $H$. Of course $E_G = E$ and so $H^*(G) \simeq H^*(P)^E$. For $g \in G$ write $g = pe$ with $p \in P$ and $e \in E$. If $p_1 \in P$ then $p_1g = pp_1e$ since $P$ is abelian, and $gp_1 = p{^e p_1}e$, so that $p_1g = gp_1$ precisely when $p_1 = {^e p_1}$. That is, $C_G(g) \cap P = C_G(e) \cap P$. Since $E$ acts semiregularly on $P \setminus \{1\}$, $C_G(e) \cap P = 1$ for $e \not= 1$ and of course $C_G(1) \cap P = P$. Therefore, $C_G(g) \cap P = 1$ for $g \not\in P$ and $C_G(g) = P$ for $1 \not= g \in P$. So if $G$ has conjugacy class representatives $\{g_i\}$, then $H^*(g_i) = k$ in degree zero whenever $g_i \not\in P$, and $H^*(g_i) = H^*(P)$ whenever $g_i \in P$. Also, the set of all $g_i$ that belong to $P$ comprise the representatives of the orbits of $E$ acting on $P$. The additive decomposition for $HH'(kG)$ now becomes

$$HH'(kG) = \bigoplus_{p \nmid |H_i|} H^*(C_G(g_i)) = H^*(P)^E \oplus \bigoplus_{1 \not= g_i \in P} H^*(P)$$

Define $\Psi : HH'(kG) \rightarrow kP \otimes H^*(P)$ by

$$\Psi(\gamma_i(\alpha)) = \begin{cases}
1 \otimes \alpha &\text{if $g_i = 1$}\\
\sum_{e \in E}{^e g_i} \otimes e^*\alpha &\text{if $1 \not= g_i \in P$}
\end{cases}$$

Clearly $\Psi(\gamma_1(\alpha)) \in (kP \otimes H^*(P))^E$ for $\alpha \in H^*(P)^E$, and also $\Psi(\gamma_i(\alpha)) \in (kP \otimes H^*(P))^E$ for $\alpha \in H^*(P)$ since

$$^f\Psi(\gamma_i(\alpha)) = \sum_{e \in E} {^{fe} g_i} \otimes (fe)^*\alpha = \sum_{e \in E} {^e g_i} \otimes e^*\alpha = \Psi(\gamma_i(\alpha))$$

whenever $f \in E$. It is clear that $\Psi$ is injective. Notice that $kP \otimes H^*(kP)$ is graded by taking $kP$ concentrated in degree zero. Since $E$ preserves the grading of $kP \otimes H^*(kP)$, an element $\xi \in kP \otimes H^*(kP)$ is fixed under $E$ if and only if each of its homogeneous components is fixed under $E$. Also, any homogeneous element $\xi$ of $kP \otimes H^*(kP)$ of degree $n$ may be expressed as

$$\xi = 1 \otimes \alpha + \sum_{1 \not= g_i \in P} \sum_{e \in E}{^e g_i} \otimes \alpha_{i,e}$$

for some $\alpha, \alpha_{i,e} \in H^*(kP)$. It is clear that $\xi$ is fixed under $E$ precisely when $\alpha \in H^*(P)^E$ and $f^*\alpha_{i,e} = \alpha_{i,fe}$ for all $e,f \in E$. In other words, $\alpha_{i,e} = e^*\alpha_{i,1}$ for all $e \in E$. Therefore, $\Psi$ is a surjective map, and it remains to verify that $\Psi$ is multiplicative. It suffices to show that $\Psi(\gamma_i(\alpha)\gamma_j(\beta)) = \Psi(\gamma_i(\alpha))\Psi(\gamma_j(\beta))$. This is clear if $i$ or $j$ equals 1 since $\gamma_1(\alpha)\gamma_i(\beta) = \gamma_i(\Res(\alpha)\beta)$. So assume $1 \not= g_i, g_j \in P$ and $\alpha, \beta \in H^*(P)$. Note that $D = E$ in the notation of (\ref{ProductFormula}) and so for every $e \in E$ there is a summand $\gamma_k( \cdot )$ in the product $\gamma_i(\alpha)\gamma_j(\beta)$, and hence a corresponding summand $\Psi(\gamma_k(\cdot))$ in $\Psi(\gamma_i(\alpha)\gamma_j(\beta))$. More precisely, for $e \in E$ there is $e' \in E$ and $g_k \in P$ with $g_k = {^{e'}(g_i{^e g_j})}$. If $g_k \not= 1$ then the summand in $\gamma_i(\alpha)\gamma_j(\beta)$ equals $\gamma_k((e')^*\alpha \smile (e'e)^*\beta)$ and so the corresponding summand in $\Psi(\gamma_i(\alpha)\gamma_j(\beta))$ equals

\begin{equation}\label{Sum1}
\begin{split}
\Psi(\gamma_k((e')^*\alpha \smile (e'e)^*\beta))
&= \sum_{f \in E} {^f ({^{e'}(g_i{^e g_j})})} \otimes f^*((e')^*\alpha \smile (e'e)^*\beta)\\
&= \sum_{f \in E} {^f(g_i{^e g_j})} \otimes (f^*\alpha \smile (fe)^*\beta)\\
\end{split}
\end{equation}

since $fe'$ ranges across $E$ as $f$ ranges across $E$. On the other hand, if $g_k = 1$ then the summand in $\gamma_i(\alpha)\gamma_j(\beta)$ equals $\gamma_1(\Tr_P^G(\alpha \smile e^*\beta)) = \sum_{f \in E} \gamma_1(f^*(\alpha \smile e^*\beta))$ by Lemma \ref{LHS}. So the corresponding summand in $\Psi(\gamma_i(\alpha)\gamma_j(\beta))$ equals

\begin{equation}\label{Sum2}
\sum_{f \in E} {^f(g_i{^e g_j})} \otimes (f^*\alpha \smile (fe)^*\beta)
\end{equation}

since ${^f (g_i {^e g_j}}) = {^f 1} = 1$ for $f \in E$. As $(f,e)$ ranges across $E \times E$ so does $(e_1,e_2) := (f,fe)$. So summing (\ref{Sum1}) and (\ref{Sum2}) across $e \in E$ yields

\begin{equation*}
\begin{split}
\Psi(\gamma_i(\alpha)\gamma_j(\beta))
&= \sum_{f \in E} \sum_{e \in E} {^f(g_i{^{e}g_j})} \otimes (f^*\alpha \smile (fe)^*\beta) \\
&= \sum_{e_1,e_2 \in E} {^{e_1}g_i}{^{e_2}g_j} \otimes (e_1^*\alpha \smile e_2^*\beta) = \Psi(\gamma_i(\alpha)) \Psi(\gamma_j(\beta))
\end{split}
\end{equation*}

as desired. So the proof is complete.
\end{proof}

\section{Groups of Order less than 16}

We may now 'complete' the computation of $HH'(kG)$ for $|G| \leq 15$ begun in \cite{Witherspoon}. We simply need to compute $HH'$ for $\F_2 D_{10}$, $\F_3 A_4$, $\F_2 D_{12}$, $\F_3 D_{12}$, $\F_2 T$, $\F_3 T$, and $\F_2 D_{14}$, where $T$ denotes the nonabelian group of order 12 not isomorphic with $D_{12}$ or $A_4$. Recall that $HH'(kG) = \bigoplus_{p \mid |H_i|} H^*(H_i)$ where $H_i = C_G(g_i)$. To describe a finite presentation for $HH'(kG)$ we follow \cite{Witherspoon} by listing homogeneous elements $X_1,\ldots,X_m$ that generate $H^*(G) \simeq \gamma_1(H^*(G))$ as an algebra subject to the homogenous relations $r_1,\ldots,r_n$; homogenous elements $Y_1,\ldots,Y_{m'} \in \bigoplus_{i \geq 2} H^*(H_i)$ that generate $\bigoplus_{i \geq 2} H^*(H_i)$ as a $\gamma_1(H^*(G))$-module subject to relations $r_1,\ldots,r_{n'}$; and relations of the form $Y_iY_j = X_{ij} + \sum X_{ij}^k Y_k$ for all $i$ and $j$ where $X_{ij},X_{ij}^k \in \gamma_1(H^*(G))$. We refer to these as relations of Type I, Type II, and Type III respectively. Together with the (implicitly understood) graded-commutative relations, we obtain an abstract presentation of the algebra $HH'(kG)$. In performing these computations it is convenient to identify $HH'(kG)$ in degree zero with $\bar{Z}(kG)$.

\begin{prop}For $n$ odd, $HH'(\F_2 D_{2n})$ has generators $C_1, C_2$, and $V$ of degrees 0,0, and 1, respectively, subject to the relation $(C_2)^2 = C_1$.\end{prop}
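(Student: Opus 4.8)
The plan is to apply the general machinery developed in the preceding sections to the specific group $G = D_{2n}$ with $n$ odd and $k = \F_2$. First I would identify the conjugacy classes and centralizers. Since $n$ is odd, $D_{2n}$ has the identity, one class of reflections (all conjugate), and $(n-1)/2$ classes of rotations $\{\rho^j, \rho^{-j}\}$; the centralizer of the identity is $G$, the centralizer of a reflection $s$ is $\langle s \rangle \cong C_2$, and the centralizer of a nontrivial rotation is the cyclic rotation subgroup $\langle \rho \rangle \cong C_n$. Over $\F_2$ only those $H_i$ with $2 \mid |H_i|$ contribute to $HH'$, so the rotation classes drop out entirely (their centralizers have odd order $n$), and we are left with exactly two contributing classes: the identity class, giving $H^*(G)$, and the reflection class, giving $H^*(C_2) = \F_2[v]$ with $\deg v = 1$. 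This already tells us $\dim \widehat{HH}^0(\F_2 D_{2n}) = 2$ by Proposition \ref{TateZero}, matching the two degree-zero generators $C_1, C_2$.

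Next I would pin down $H^*(G) = \gamma_1(H^*(G))$. The group $D_{2n}$ with $n$ odd has a normal $2$-complement, namely the rotation subgroup $C_n$, with Sylow $2$-subgroup a reflection $\langle s\rangle \cong C_2$; so by Lemma \ref{ResComp}, $\Res\colon H^*(D_{2n}) \to H^*(C_2)$ is an isomorphism. Hence $\gamma_1(H^*(G)) \cong \F_2[C_2$-class element$]$ is a polynomial ring on one degree-zero generator $C_1$ (corresponding to $1 \in H^0$) — wait, more precisely $H^*(G)$ is a one-dimensional-per-degree graded algebra isomorphic to $\F_2[u]$ with $\deg u = 1$; in the $HH'$ picture $C_1$ is the principal block idempotent part and the positive-degree part is $\gamma_1$ of powers of the generator. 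I would then track how the normalized degree-zero basis $\bar Z(\F_2 D_{2n})$ (spanned by the two class sums $\kappa$ with $2 \nmid |C_G(g_i)|$) gives $C_1, C_2$ and how $C_2$, the class sum of the reflections, squares. The key computation is a single application of the product formula (\ref{ProductFormula}) to $\gamma_2(1) \smile \gamma_2(1)$ where $g_2 = s$ is a reflection: one computes the double cosets $H_2 \backslash G / H_2$ with $H_2 = \langle s\rangle$, determines which products $s \cdot {}^x s$ are conjugate into which classes, and reads off the coefficient. Since $s \cdot {}^x s$ for a rotation conjugator $x = \rho^j$ gives $s\rho^{2j}s \cdot \rho^{?}$... concretely $s({}^{\rho^j}s) = s(\rho^{-2j}s\cdot$ correction$)$ — I would carefully compute that exactly one double coset contributes a term landing in the identity class and the trace/restriction bookkeeping produces $C_2^2 = C_1$, with all rotation-class terms vanishing because their centralizers are $2'$-groups (consistent with the remark in the excerpt that $\gamma_i(\alpha)\smile\gamma_j(\beta) = 0$ when the relevant $H^{\deg}(W) = 0$).

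The main obstacle will be the explicit double-coset and conjugation bookkeeping in the product formula for $\gamma_2(1)^2$: one must enumerate $D \subseteq G$ with $G = \coprod_{x\in D} H_2 x H_2$, for each $x$ identify the class representative $g_k$ conjugate to $g_2\,{}^x g_2$ and the conjugating element $y$, compute $W = {}^y H_2 \cap {}^{yx} H_2$, and evaluate $\Tr_W^{H_k}(\Res(\cdots))$ — and then verify that the sum collapses to exactly $C_1$ with coefficient $1$ in characteristic $2$. Since $H_2 \cong C_2$ is small, there are only a couple of double cosets to handle, so this is bounded work, but getting the coefficient right (including the factor arising because multiple $x$ may contribute to the same class, and the trace from the trivial group $W = 1$ up to the appropriate $H_k$) is where care is needed. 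Once $C_2^2 = C_1$ is established, it is immediate that $HH'(\F_2 D_{2n})$ is generated by $C_1, C_2, V$ (where $V$ is the degree-one generator of $\gamma_1(H^*(G))$, which equals $C_2 \smile$(degree-one reflection class) or more simply the image of the polynomial generator), subject to $C_2^2 = C_1$ and graded-commutativity, with no further relations because $H^*(G) = \F_2[V]$ is free and the $\gamma_1(H^*(G))$-module $\bigoplus_{i\geq 2} H^*(H_i) = H^*(C_2)$ is free of rank one on $C_2$ — so there are no nontrivial Type II relations and the only Type III relation is $C_2^2 = C_1$.
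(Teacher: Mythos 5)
Your identification of the contributing classes, of $H^*(G)\simeq H^*(\langle b\rangle)=\F_2[v]$ via Lemma \ref{ResComp}, and of the generator set $C_1,C_2,V$ with a single Type III relation $(C_2)^2=C_1$ and no Type I or Type II relations is all correct, and up to that point your argument is the paper's. The genuine difference is in how the one nontrivial relation is established, and here you have left the crucial step as a sketch. You propose to run the product formula (\ref{ProductFormula}) on $\gamma_2(1)\smile\gamma_2(1)$ and defer the double-coset bookkeeping as the ``main obstacle''; the paper sidesteps this entirely by identifying $HH'^0(kG)$ with $\bar{Z}(kG)=Z(kG)/Z^{\text{pr}}(kG)$ (as announced at the start of Section 4) and computing in the group algebra: with $\tau=\sum_{i=0}^{n-1}a^i$ the reflection class sum is $\tau b$, and $(\tau b)^2=\tau\,({}^b\tau)\,b^2=\tau^2=n\tau=\tau\equiv 1 \pmod{Z^{\text{pr}}(kG)}$, since $\tau-1$ is a sum of rotation class sums, all of which lie in $Z^{\text{pr}}(kG)=\Tr_1^G(kG)$ by Proposition \ref{TateZero}. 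That is a two-line computation. Your route does close up if you carry it out: the double cosets of $\langle b\rangle$ in $G$ are represented by $D=\{1,a,\dots,a^l\}$; for $x=1$ one gets $g_k=1$, $W=\langle b\rangle$, and the contribution $\gamma_1(\Tr_{\langle b\rangle}^G(1))=\gamma_1(n)=C_1$ because $n$ is odd; for $x=a^j$ with $j\neq 0$ one gets $b\cdot{}^{a^j}b=a^{-2j}$, a nontrivial rotation, $W=1$, and a contribution which is a scalar multiple of a rotation class sum and hence vanishes in $HH'$ since $|\langle a\rangle|$ is odd. So the two methods agree; the paper's is shorter because degree-zero products in $HH'$ are simply products in $\bar{Z}(kG)$, while yours has the pedagogical merit of exercising the product formula. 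If you keep your route, you must actually write out this enumeration rather than assert that the sum ``collapses to exactly $C_1$''---as written, the only computation the proposition requires is the one you have not done.
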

\begin{proof}
Write $n = 2l+1$ and $D_{2n} = \lr{a,b | a^n = b^2 = 1, bab^{-1} = a^{-1} }$. Note that $D_{2n}$ has conjugacy class representatives $\{ g_i \} = \{ 1,a,\ldots,a^l,b \}$ with corresponding centralizers $\{H_i\} = \{ D_{2n}, \lr{a}, \ldots, \lr{a}, \lr{b} \}$. Lemma \ref{ResComp} implies that $\Res : H^*(D_{2n}) \rightarrow H^*(\lr{b})$ is an isomorphism, and so $H^*(D_{2n})$ is generated by an element $v$ of degree 1. Therefore, $HH'(\F_2 D_{2n})$ has generators $C_1 = \gamma_1(1), C_2 = \gamma_2(1)$, and $V = \gamma_1(v)$ of degrees 0,0,and 1, respectively. The only relations are of Type III in degree zero. Identifying $HH'^0(kG)$ with $\bar{Z}(kG)$ and defining $\tau = \sum_{i=0}^{n-1} a^i$, this relation takes the form

$$(C_2)^2 = (\tau b + Z^{\text{pr}}(kG))^2 = \tau^2 + Z^{\text{pr}}(kG) = \tau + Z^{\text{pr}}(kG) = 1 + Z^{\text{pr}}(kG) = C_1$$

So we have a complete presentation of $HH'(kG)$.
\end{proof}

\begin{prop}$HH'(\F_3 A_4)$ has generators $C_1,C_2,C_3,V,$ and $U$ of degrees 0,0,0,1, and 2, respectively, subject to the relations $(C_2)^2 = C_3, (C_3)^2 = C_2$, and $C_2C_3 = C_3C_2 = C_1$.\end{prop}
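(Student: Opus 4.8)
The plan is to follow the pattern of the preceding proof: extract the conjugacy data of $A_4$ over $\F_3$, read off the additive decomposition of $HH'(\F_3 A_4)$, identify the subalgebra $\gamma_1(H^*(A_4))$ together with the module structure of the remaining summands over it, and then compute the three degree-zero products using the product formula (\ref{ProductFormula}). Write $a = (123)$ and $b = (12)(34)$. Then $A_4$ has conjugacy class representatives $g_1 = 1$, $g_2 = a$, $g_3 = a^2$, $g_4 = b$, with centralizers $H_1 = A_4$, $H_2 = H_3 = \lr{a} \cong C_3$, and $H_4 = V$, the Klein four-group. Since $3 \nmid |V|$, Proposition \ref{TateZero} and the additive decomposition give $HH'(\F_3 A_4) = \gamma_1(H^*(A_4)) \oplus \gamma_2(H^*(\lr{a})) \oplus \gamma_3(H^*(\lr{a}))$, which is $3$-dimensional in each degree $n \ge 0$.

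Because $V \triangleleft A_4$ is a normal $3$-complement, Lemma \ref{ResComp} makes $\Res : H^*(A_4) \to H^*(\lr{a}) = \F_3[u] \otimes \Lambda(v)$ an isomorphism, with $\deg v = 1$ and $\deg u = 2$; so $\gamma_1(H^*(A_4))$ is generated by $V = \gamma_1(v)$ and $U = \gamma_1(u)$, and the only relation between them, $V^2 = 0$, is forced by graded-commutativity since $\Char \F_3 \ne 2$. Thus there are no relations of Type I. Next, the special case $\gamma_1(\alpha) \smile \gamma_i(\beta) = \gamma_i(\Res \alpha \smile \beta)$ of (\ref{ProductFormula}) used in the proof of Theorem \ref{Linckelmann}, together with the isomorphism $\Res : H^*(A_4) \to H^*(\lr{a})$, shows that $M := \gamma_2(H^*(\lr{a})) \oplus \gamma_3(H^*(\lr{a}))$ is free of rank two over $\gamma_1(H^*(A_4))$ on $C_2 = \gamma_2(1)$ and $C_3 = \gamma_3(1)$; hence there are no relations of Type II and $C_2, C_3$ are the module generators.

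It then remains to compute the degree-zero products $C_2^2$, $C_3^2$, and $C_2 C_3 = C_3 C_2$ via (\ref{ProductFormula}). Since $\lr{a}$ has prime order and $A_4$ has four Sylow $3$-subgroups, we have $N_{A_4}(\lr{a}) = \lr{a}$, so $A_4$ is the disjoint union of exactly two $(\lr{a}, \lr{a})$-double cosets, represented by $1$ and some $x_0 \notin \lr{a}$. For $x = x_0$ the subgroup $W = {}^{y}\lr{a} \cap {}^{yx_0}\lr{a}$ is trivial, and $\Tr_1^{H_k}(1) = |H_k|\cdot 1 = 0$ in $\F_3$ (because $3 \mid |H_k|$ for $k = 1, 2, 3$), so these terms vanish. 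For $x = 1$: from $g_2 g_2 = a^2 = g_3$ with $W = \lr{a}$ we get $C_2^2 = C_3$; from $g_3 g_3 = a = g_2$ we get $C_3^2 = C_2$; and from $g_2 g_3 = 1 = g_1$ with $W = \lr{a}$ and $\Tr_{\lr{a}}^{A_4}(1) = [A_4 : \lr{a}]\cdot 1 = 4 \cdot 1 = 1$ in $\F_3$ we get $C_2 C_3 = \gamma_1(1) = C_1$; the equality $C_3 C_2 = C_1$ follows since $\bar{Z}(\F_3 A_4)$ is commutative.

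Finally I would check completeness by a Hilbert-series count. The algebra abstractly presented by $C_1, C_2, C_3, V, U$ with graded-commutativity and the stated Type III relations has degree-zero part $\F_3[C_2]/(C_2^3 - C_1)$ (using $C_3 = C_2^2$ and $C_1 = C_2 C_3 = C_2^3$, with $C_1$ the unit), of dimension $3$; and, in degree $n \ge 1$, it is spanned by $U^i V^{\varepsilon}$, $U^i V^{\varepsilon} C_2$, $U^i V^{\varepsilon} C_3$ with $2i + \varepsilon = n$, again of dimension $3$, the multiplication being pinned down by the $\gamma_1(H^*(A_4))$-module structure of $M$ and the Type III relations as in the framework of this section. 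These dimensions agree with those of $HH'(\F_3 A_4)$ in every degree, so the presentation is complete. The main obstacle is the product-formula bookkeeping behind the Type III relations: enumerating the $(\lr{a}, \lr{a})$-double cosets, pinning down which conjugacy class $g_i \cdot {}^{x} g_j$ represents, and keeping the case $W = 1$ (where $\Tr_1^{H_k}(1)$ vanishes because $3 \mid |H_k|$) apart from the case $W = \lr{a}$ in the computation of $C_2 C_3$ (where $\Tr_{\lr{a}}^{A_4}(1) = 4 \equiv 1$).
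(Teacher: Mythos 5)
Your proposal follows essentially the same route as the paper: the same conjugacy and centralizer data for $A_4$, Lemma \ref{ResComp} to identify $H^*(A_4)$ with $H^*(\lr{a}) = \F_3[u]\otimes\Lambda(v)$, freeness of the two remaining summands over $\gamma_1(H^*(A_4))$, and the observation that the only nontrivial relations are Type III in degree zero. The one point to correct is your justification for discarding the nontrivial double coset in the product $C_2C_3$: there $g_2\,{}^{x_0}g_3 = a\,{}^{x_0}(a^2)$ is a \emph{double transposition} (e.g.\ $(123)(124)=(13)(24)$), not a $3$-element, so the relevant $H_k$ is the Klein four-group, $3\nmid |H_k|$, and $\Tr_1^{H_k}(1)=4=1\neq 0$ in $\F_3$. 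That term does vanish, but because the degree-zero contribution $\gamma_4(H^0(H_4))$ lies in $Z^{\text{pr}}(kG)$ --- equivalently, the $i=4$ summand is absent from $HH'$ by Proposition \ref{TateZero}, which is precisely why the paper passes from $HH^*$ to $HH'$ --- and not because the transfer is zero. (For $C_2^2$ and $C_3^2$ the products $a\,{}^{x_0}a$ and $a^2\,{}^{x_0}a^2$ really are $3$-cycles, so your argument is fine there.) The paper itself avoids this double-coset bookkeeping in degree zero by computing directly with class sums in $\bar{Z}(\F_3 A_4)$, where one finds $\kappa_2^2=\kappa_3$, $\kappa_3^2=\kappa_2$, and $\kappa_2\kappa_3 = 1+\kappa_4\equiv 1$ modulo $Z^{\text{pr}}$; with that one repair, your argument is complete.
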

\begin{proof}
Let $a = (12)(34), b = (13)(24)$, and $c = (123)$. So $A_4$ has conjugacy class representatives $\{ g_i \} = \{ 1, c, c^2, a \}$ with corresponding centralizers $\{ H_i \} = \{ A_4, \lr{c}, \lr{c}, P \}$ where $P = \lr{a,b}$ is a normal Sylow 2-subgroup of $A_4$. Lemma \ref{ResComp} implies that $\Res : H^*(A_4) \rightarrow H^*(\lr{c})$ is an isomorphism. So $H^*(A_4) = \F_3[u,v]$ where $\deg u = 2$ and $\deg v = 1$. Of course, $H^*(\lr{c})$ is free as an $H^*(A_4)$-module with basis 1. Hence, $HH'(\F_3 A_4)$ has generators $C_1 = \gamma_1(1), C_2 = \gamma_2(1), C_3 = \gamma_3(1), V = \gamma_1(v)$, and $U = \gamma_1(u)$ of degrees 0,0,0,1, and 2, respectively. There are no nontrivial Type I or Type II relations. The Type III relations only occur in degree zero, in which case they are $(C_2)^2 = C_3, (C_3)^2 = C_2$, and $C_2C_3 = C_1$, as is straightforward to check.
\end{proof}

\begin{thm}$HH'(\F_2 D_{12})$ has generators $C_1$,$C_2$,$C_3$,$C_4$,$C_5$,$C_6$,$V_1$, and $V_2$ of degrees 0,0,0,0,0,0,1, and 1, respectively, subject to the relations $V_2C_3 = V_2C_4 = 0$ and those given by the table below:

\begin{table}[here]
\begin{center}
\begin{tabular}{c|ccccc}
  & $C_2$ & $C_3$ & $C_4$ & $C_5$ & $C_6$ \\
  \hline
  $C_2$ & $C_1$ & $C_4$ & $C_3$ & $C_6$ & $C_5$ \\
  $C_3$ & $C_4$ & $C_4$ & $C_3$ & 0 & 0 \\
  $C_4$ & $C_3$ & $C_3$ & $C_4$ & 0 & 0 \\
  $C_5$ & $C_6$ & 0 & 0 & $C_1+C_4$ & $C_2+C_3$ \\
  $C_6$ & $C_5$ & 0 & 0 & $C_2+C_3$ & $C_1+C_4$
\end{tabular}
\end{center}
\end{table}

Moreover, $HH'(\F_3 D_{12})$ has generators $C_1,C_2,C_3,C_4,X_3,X_4,Y_3,Y_4,V$, and $U$ of degrees 0,0,0,0,1,1,2,2,3, and 4, respectively, subject to the Type II relations $VX_3 = VX_4 = 0$, $UX_3 = VY_3$, and $UX_4 = VY_4$; Type III relations in degree zero given as $(C_2)^2 = C_1$, $(C_3)^2 = C_4-C_1$, $(C_4)^2 = C_4-C_1$, $C_2C_3 = C_4$, $C_2C_4 = C_3$, and $C_3C_4 = C_3-C_2$; and the Type III relations in positive degree given in the following table.

\begin{table}[here]
\begin{center}
\begin{tabular}{c|cccc}
  & $X_3$ & $Y_3$ & $X_4$ & $Y_4$ \\
  \hline
  $C_2$ & $2X_4$ & $2Y_4$ & $2X_3$ & $2Y_3$ \\
  $C_3$ & $X_4$ & $Y_4$ & $X_3$ & $Y_3$ \\
  $C_4$ & $2X_3$ & $2Y_3$ & $2X_4$ & $2Y_4$ \\
  $X_3$ & 0 & $VC_4+V$ & 0 & $2VC_2+2VC_3$ \\
  $Y_3$ & $VC_4+V$ & $UC_4+U$ & $2VC_2+2VC_3$ & $2UC_2+2UC_3$ \\
  $X_4$ & 0 & $2VC_2+2VC_3$ & 0 & $VC_4+V$ \\
  $Y_4$ & $2VC_2+2VC_3$ & $2UC_2+2UC_3$ & $VC_4+V$ & $UC_4+U$ \\
\end{tabular}
\end{center}
\end{table}

\end{thm}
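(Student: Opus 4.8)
The plan is to build the presentation one characteristic at a time, assembling in each case the four blocks of data it consists of: the conjugacy--class and centralizer bookkeeping; generators and Type~I relations for $\gamma_1(H^*(G))$; generators and Type~II relations for the module $\bigoplus_{i\ge 2}H^*(H_i)$ over it; and the Type~III products. Write $D_{12}=\lr{a,b\mid a^6=b^2=1,\ bab^{-1}=a^{-1}}$. Its conjugacy class representatives are $g_1=1$, $g_2=a^3$, $g_3=a$, $g_4=a^2$, $g_5=b$, $g_6=ab$, with centralizers $H_1=H_2=D_{12}$, $H_3=H_4=\lr a\cong C_6$, and $H_5\cong H_6\cong C_2\times C_2$. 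Two structural facts will be used throughout: $D_{12}\cong C_2\times S_3$, the $C_2$ being generated by the central element $a^3$; and $\lr{a^2}\cong C_3$ is a normal $2$-complement in $D_{12}$, so Lemma~\ref{ResComp} governs $H^*(D_{12})$ when $p=2$. In degree zero we identify $HH'$ with $\bar Z$ and $\gamma_i(1)$ with the class sum $\kappa_i$ modulo $Z^{\text{pr}}$, as in the first proposition of this section.

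For $\F_2 D_{12}$ every $|H_i|$ is even, so $r=6$, and Proposition~\ref{TateZero} gives $\Tr_1^{D_{12}}(\F_2 D_{12})=0$; hence $\bar Z(\F_2 D_{12})=Z(\F_2 D_{12})$, with the class sums $\kappa_i$ as a basis. By Lemma~\ref{ResComp}, $H^*(D_{12})\cong H^*(\lr{a^3,b})=\F_2[V_1,V_2]$, which is polynomial so there are no Type~I relations, and likewise $H^*(\lr a)\cong H^*(\lr{a^3})=\F_2[t]$. Choosing $V_1,V_2$ dual to $a^3,b$ respectively, restriction to $\lr a$ sends $V_1\mapsto t$ and $V_2\mapsto 0$, so $H^*(H_3),H^*(H_4)$ are cyclic over $\gamma_1(H^*(G))$ on $C_3=\gamma_3(1)$, $C_4=\gamma_4(1)$ with the single Type~II relations $V_2C_3=V_2C_4=0$, whereas restriction to each of $H_2,H_5,H_6$ is an isomorphism (trivially for $H_2$, and because $H_5,H_6$ are Sylow $2$-subgroups of a group with normal $2$-complement), so $H^*(H_2),H^*(H_5),H^*(H_6)$ are free of rank one on $C_2,C_5,C_6$. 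Since all these module generators sit in degree zero, the Type~III relations occur only in degree zero, where they are read off from the class-sum multiplication of $Z(\F_2 D_{12})$; for instance $\kappa_5^2=(b+a^2b+a^4b)^2=1+a^2+a^4=\kappa_1+\kappa_4$, so $C_5^2=C_1+C_4$. Computing the ten products $\kappa_i\kappa_j$ with $2\le i\le j\le 6$ produces the displayed table.

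For $\F_3 D_{12}$ only $H_1,H_2,H_3,H_4$ have order divisible by $3$, so $r=4$, and Proposition~\ref{TateZero} gives $Z^{\text{pr}}=\Tr_1^{D_{12}}(\F_3 D_{12})=\F_3\kappa_5\oplus\F_3\kappa_6$; thus $\bar Z(\F_3 D_{12})$ has the images of $\kappa_1,\dots,\kappa_4$ as a basis. From $D_{12}\cong C_2\times S_3$ and Lemma~\ref{LHS} applied to $C_3\cong\lr{a^2}\triangleleft S_3$ of index $2$ one gets $H^*(D_{12})\cong H^*(C_3)^{C_2}$; since $H^*(C_3)=\F_3[u]\otimes\Lambda(v)$ ($\deg u=2$, $\deg v=1$, $u$ the Bockstein of $v$) and the $C_2$ acts by inversion, hence by $-1$ on both $u$ and $v$, the invariants are $\F_3[U]\otimes\Lambda(V)$ with $U=u^2$, $V=vu$, $\deg U=4$, $\deg V=3$ --- the only Type~I relation being the implicit $V^2=0$. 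Restriction $H^*(D_{12})\to H^*(\lr a)=\F_3[u]\otimes\Lambda(v)$ is then the inclusion $U\mapsto u^2$, $V\mapsto vu$, so $H^*(H_3),H^*(H_4)$ are generated over $R:=\gamma_1(H^*(G))$ by $\{C_3,X_3,Y_3\}$ and $\{C_4,X_4,Y_4\}$, where $X_i=\gamma_i(v)$, $Y_i=\gamma_i(u)$; computing the kernel of the presentation map $R^3\to\F_3[u]\otimes\Lambda(v)$ yields precisely the Type~II relations $VX_3=VX_4=0$ and $UX_3=VY_3$, $UX_4=VY_4$ (the last two since $u^2v=vu^2$ by graded commutativity), while $H^*(H_2)$ is again free of rank one on $C_2$. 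The degree-zero Type~III relations again come from class-sum multiplication modulo $Z^{\text{pr}}$; e.g.\ $\kappa_3^2=a^2+2+a^4\equiv -\kappa_1+\kappa_4$, so $(C_3)^2=C_4-C_1$.

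It remains to compute the positive-degree Type~III products, via the product formula (\ref{ProductFormula}), which is valid for $HH'$ by the remarks of Section~2. For $\gamma_i(\cdot)\smile\gamma_j(\cdot)$ with $i,j\in\{3,4\}$ one has $D=\{1,b\}$, and for each $x\in D$ one identifies $g_k$ (it is $g_2$, or one of $g_3,g_4$, or $g_1$), the conjugating element $y\in\{1,b\}$, the subgroup $W={}^{y}H_i\cap{}^{yx}H_j=\lr a$, and the transfer $\Tr_W^{H_k}$ --- which is the identity when $H_k\in\{H_3,H_4\}$ and equals $\Tr_{\lr a}^{D_{12}}$ when $H_k\in\{H_1,H_2\}$ --- and then cup-multiplies the restricted classes. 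Using that $b$ acts on $H^*(\lr a)$ by $-1$ and that $\Tr_{\lr a}^{D_{12}}(\xi)=\xi+b^*\xi$ by Lemma~\ref{LHS}, one evaluates each entry; for example $X_3Y_3=\gamma_3(v)\smile\gamma_3(u)$ has an $x=1$ term contributing $\gamma_4(vu)=VC_4$ and an $x=b$ term contributing $\gamma_1(\Tr_{\lr a}^{D_{12}}(-vu))=\gamma_1(-2V)=V$, so $X_3Y_3=VC_4+V$; the products with $C_2$ are the same computation with $D=\{1\}$. The main obstacle is exactly this last step: threading the product-formula data correctly through each table entry, and re-expressing transferred classes such as $\Tr_{\lr a}^{D_{12}}(vu)=2vu$ and $\Tr_{\lr a}^{D_{12}}(u^2)=2u^2$ back inside $H^*(D_{12})$, where $vu=V$ and $u^2=U$ --- this is what produces the coefficients $\pm 1,\pm 2$ scattered through the table. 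Everything else, including all the degree-zero products and the Type~I/II bookkeeping, is routine.
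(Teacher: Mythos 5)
Your proposal is correct and follows essentially the same route as the paper: Lemma \ref{ResComp} for $p=2$ and Lemma \ref{LHS} for $p=3$ to identify $H^*(D_{12})$, class-sum arithmetic in $\bar{Z}(kD_{12})$ for the degree-zero Type III relations, a free presentation of $H^*(\lr{a})$ as an $H^*(D_{12})$-module for the Type II relations, and the product formula (\ref{ProductFormula}) for the positive-degree products. (Your observation that $b$ acts by $-1$ on both generators of $H^*(\lr{a};\F_3)$ is the right one --- the paper's closing remark that $b^*x=x$ and $b^*y=y$ is a sign slip, as one sees from the table entries it is meant to justify --- and it is exactly what produces the coefficients $\pm 1,\pm 2$ in the table.)
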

\begin{proof}
Let $G = D_{12} = \lr{a,b | a^6 = b^2 = 1, bab^{-1} = a^{-1}}$ and define $N = \lr{a}$ and $Z = Z(G) = \lr{a^3}$. So $G$ has conjugacy class representatives $\{g_i\} = \{1,a^3,a,a^2,b,ab\}$ with corresponding centralizers $\{G, G, N, N, \lr{a^3,b}, \lr{a^3,ab}\}$ and orders $\{12,12,6,6,4,4\}$.

$(\textit{p = 2})$ Since $\lr{a^2}$ is a normal 2-complement, Lemma \ref{ResComp} implies that $\Res : H^*(G) \rightarrow H^*(P)$ is an isomorphism whenever $P \in \Syl_2(G)$, and in particular $H^*(P)$ is free as an $H^*(G)$-module with basis 1. Fix $P = \lr{a^3,b}$ and choose generators $v_1$ and $v_2$ of $H^*(G)$ with degrees 1 and 1, so that $\Res^G_P(v_1)$ and $\Res^G_P(v_2)$ are 'dual' to $a^3$ and $b$, in the sense that $\Res^G_Z(v_1) \not= 0$ and $\Res^G_Z(v_2) = 0$. From $\Res^N_Z \Res^G_N = \Res^P_Z \Res^G_P$ and $\Tr_Z^N \Res^N_Z = |N:Z|\Id = \Id$ we obtain $\Res^G_N = \Tr_Z^N \Res^P_Z \Res^G_P$. Since $\Tr_Z^N$ is an isomorphism by Lemma \ref{ResComp}, $\Res^G_N(v_1)$ generates $H^*(N)$ as an algebra, and $\Res^G_N(v_2) = 0$. In particular, $H^*(N)$ is generated by 1 as an $H^*(G)$-module. Therefore, we have generators of $HH'(\F_2 D_{12})$ given by $C_i = \gamma_i(1)$ for $1 \leq i \leq 6$, and $V_j = \gamma_1(v_j)$ for $j=1,2$. There are no nontrivial Type I relations, Type III relations occur only in degree zero, and we have the relations $V_2C_3 = V_2C_4 = 0$ of Type II. If we let $H^*(G)[n]$ denote the free $H^*(G)$-module with grading $H^i(G)[n] = H^{i-n}(G)$, then we have a graded complex of $H^*(G)$-modules given by

$$\xymatrix{H^*(G)[1] \ar[r]^{\delta_1} & H^*(G) \ar[r]^{\delta_0} & H^*(N) \ar[r] & 0}$$

where $\delta_0(\alpha) = \alpha.1$ and $\delta_1(\alpha) = \alpha v_2$ for $\alpha \in H^*(G)$. Note that $\delta_0$ is surjective and $\delta_1$ is injective since $H^*(G)$ is an integral domain. Since $\dim H^i(G)[1]-\dim H^i(G)+\dim H^i(N) = 0$ for all $i \geq 0$, we see that this complex is exact, and hence we have a free presentation of $H^*(N)$ as an $H^*(G)$-module. In particular, we have accounted for all Type II relations.

$(\textit{p = 3})$ Lemma \ref{LHS} implies that $\Res : H^*(G) \rightarrow H^*(N)^G$ is an isomorphism. So there are generators $x$ and $y$ of $H^*(N)$ with degrees 1 and 2, and generators $v$ and $u$ of $H^*(G)$ with degrees 3 and 4, such that restriction $H^*(G) \rightarrow H^*(N)$ sends $v$ to $xy$ and $u$ to $y^2$. In particular, $H^*(N)$ is generated as an $H^*(G)$-module by $1,x$, and $y$, with relations $v.x = 0$ and $u.x = v.y$. So letting $B[n]_* = H^*(G)[n]$ denote the free graded $H^*(G)$-module of rank 1, we have a graded complex of $H^*(G)$-modules given by

$$\xymatrix{B[4] \oplus B[5] \ar[r]^(.4){\delta_1} & B \oplus B[1] \oplus B[2] \ar[r]^(.6){\delta_0} & H^*(N) \ar[r] & 0}$$

where

\begin{equation*}
\begin{split}
\delta_0(\alpha_1,\alpha_2,\alpha_3) &= \alpha_1.1+\alpha_2.x+\alpha_3.y \\
\delta_1(\alpha_1,\alpha_2) &= \alpha_1(0,v,0)+\alpha_2(0,u,-v)
\end{split}
\end{equation*}

Clearly $\delta_0$ is surjective. Suppose $(\alpha_1,\alpha_2) \in (B[4] \oplus B[5])_i$ and $\delta_1(\alpha_1,\alpha_2) = 0$, so that $\alpha_2v = 0$ and $\alpha_2 u + \alpha_1 v = 0$. Homogeneous elements of even order are non-zero divisors in $H^*(G)$, whereas odd degree elements are only annihilated by other odd degree elements. So $\alpha_2 = \lambda vu^j$ for some $\lambda \in k$ and $j \geq 0$. Also, from $0 = v(\alpha_1 + \lambda u^{j+1})$ we obtain $\alpha_1 = -\lambda u^{j+1}$ and so $i = 4j+8$. Therefore, $\Ker(\delta_1) = \bigoplus_{j=0}^{\infty} \Ker(\delta_1|(B[4] \oplus B[5])_{4j+8})$ where $\Ker(\delta_1|(B[4] \oplus B[5])_{4j+8})$ is spanned by $(-u^{j+1},vu^j)$. Using the fact that $\dim B[n]_t$ has Poincar\'{e} series $q_n(t)$ given as

$$q_n(t) = t^n(1+t^3+t^4+t^7+t^8+\cdots)$$

it is easy to see that the sequence is exact, and hence we have found a free presentation of $H^*(N)$ as an $H^*(G)$-module. Thus, $HH'(\F_3 D_{12})$ has generators given by $C_i = \gamma_i(1)$ for $1 \leq i \leq 4$, $V = \gamma_1(v)$, $U = \gamma_1(u)$, and $X_j = \gamma_j(x)$ and $Y_j = \gamma_j(y)$ for $j = 3, 4$. There are no nontrivial Type I relations. The Type II relations are $VX_3 = VX_4 = 0$, $UX_3 = VY_3$, and $UX_4 = VY_4$. Type III relations in degree zero are obtained in the usual fashion, and Type III relations in positive degrees are obtained from the rules:

\begin{equation*}
\begin{split}
\gamma_3(\alpha) \smile \gamma_3(\beta) &= \gamma_4(\alpha \smile \beta) + \gamma_1(\Tr_N^G(\alpha \smile b^*\beta)) \\
\gamma_4(\alpha) \smile \gamma_4(\beta) &= \gamma_4(b^*\alpha \smile b^*\beta) + \gamma_1(\Tr_N^G(\alpha \smile b^*\beta)) \\
\gamma_3(\alpha) \smile \gamma_4(\beta) &= \gamma_2(\Tr_N^G(\alpha \smile \beta)) + \gamma_3(b^*\alpha \smile \beta))
\end{split}
\end{equation*}

Note that $b^*x = x$ and $b^*y = y$; this is how we computed $H^*(G) \simeq H^*(N)^G$. So we have a complete presentation of $HH'(\F_3 D_{12})$.
\end{proof}

\newpage

\begin{thm}Suppose $T = \Z_4 \ltimes \Z_3$. Then $HH'(\F_2 T)$ has generators $C_1$, $C_2$, $C_3$, $C_4$, $C_5$, $C_6$, $U$, $V$, $X_1$ and $X_2$ of degrees 0,0,0,0,0,0,1,2,1, and 1, respectively, subject to the Type I relation $V^2 = 0$, the Type II relations $VC_3 = VC_4 = VX_1 = VX_2 = 0$, the Type III relations in degree zero given as follows:

\begin{table}[here]
\begin{center}
\begin{tabular}{c|ccccccc}
  & $C_2$ & $C_3$ & $C_4$ & $C_5$ & $C_6$ \\
  \hline
  $C_2$ & $C_1$ & $C_4$ & $C_3$ & $C_6$ & $C_5$ \\
  $C_3$ & $C_4$ & $C_3$ & $C_4$ & 0 & 0 \\
  $C_4$ & $C_3$ & $C_4$ & $C_3$ & 0 & 0 \\
  $C_5$ & $C_6$ & 0 & 0 & $C_2+C_4$ & $C_1+C_3$ \\
  $C_6$ & $C_5$ & 0 & 0 & $C_1+C_3$ & $C_2+C_4$ \\
\end{tabular}
\end{center}
\end{table}

and the Type III relations in positive degrees given by following table:

\begin{table}[here]
\begin{center}
\begin{tabular}{c|ccccccc}
  & $C_2$ & $C_3$ & $C_4$ & $C_5$ & $C_6$ & $X_3$ & $X_4$ \\
  \hline
  $X_3$ & $X_4$ & $X_3+V$ & $X_4$ & $VC_5$ & $VC_6$ & $UC_3$ & $UC_4$ \\
  $X_4$ & $X_3$ & $X_3+V$ & $X_4$ & $VC_6$ & $VC_5$ & $UC_4$ & $UC_3$ \\
\end{tabular}
\end{center}
\end{table}

Moreover, $HH'(\F_3 T)$ has generators $C_1,C_2,C_3,C_4,U,V,X_1,X_2,Y_1$ and $Y_2$ of degrees 0,0,0,0,3,4,1,2,1, and 2, respectively, subject to the Type III relations in degree zero given as follows:

\begin{table}[here]
\begin{center}
\begin{tabular}{c|ccccccc}
  & $C_2$ & $C_3$ & $C_4$ \\
  \hline
  $C_2$ & $C_1$ & $C_4$ & $C_3$ \\
  $C_3$ & $C_4$ & $2C_1+C_3$ & $2C_2+C_4$ \\
  $C_4$ & $C_3$ & $2C_2+C_4$ & $2C_1+C_3$ \\
\end{tabular}
\end{center}
\end{table}

and the Type III relations in positive degrees given as

\begin{table}[here]
\begin{center}
\begin{tabular}{c|ccccccc}
  & $C_2$ & $C_3$ & $C_4$ & $X_3$ & $Y_3$ & $X_4$ & $Y_4$ \\
  \hline
  $X_3$ & $X_4$ & $2X_3$ & $2X_4$ & 0 & $VC_3+V$ & 0 & $VC_4+VC_2$ \\
  $Y_3$ & $Y_4 $ & $2Y_3$ & $2Y_4$ & $VC_3+V$ & $UC_3+U$ & $VC_4+VC_2$ & $UC_4+UC_2$ \\
  $X_4$ & $X_3$ & $2X_4$ & $2X_3$ & 0 & $VC_4+VC_2$ & 0 & $VC_3+V$ \\
  $Y_4$ & $Y_3$ & $2Y_4$ & $2Y_3$ & $VC_4+VC_2$ & $UC_4+UC_2$ & $VC_3+V$ & $UC_3+U$ \\
\end{tabular}
\end{center}
\end{table}

\end{thm}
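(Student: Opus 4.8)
The plan is to proceed exactly as in the preceding theorem, with $T=\langle a,b\mid a^4=b^3=1,\ aba^{-1}=b^{-1}\rangle$ playing the role of $D_{12}$. First I would record the conjugacy data: $a^2$ is central, $a$ inverts $b$, and $T$ has conjugacy class representatives $\{g_i\}=\{1,a^2,b,a^2b,a,a^3\}$ with centralizers $\{T,T,N,N,M,M\}$ and orders $\{12,12,6,6,4,4\}$, where $N=\langle a^2,b\rangle\cong\Z_6$ and $M=\langle a\rangle\cong\Z_4$. For $p=2$ the normal subgroup $\langle b\rangle$ is a $2$-complement, so Lemma \ref{ResComp} gives $\Res\colon H^*(T)\xrightarrow{\ \sim\ }H^*(M;\F_2)=\F_2[y]\otimes\Lambda(x)$ with $\deg x=1,\ \deg y=2$; in particular $H^*(T;\F_2)$ carries the relation $x^2=0$, which will be the only Type I relation. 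For $p=3$ the Sylow $3$-subgroup $\langle b\rangle\cong\Z_3$ is normal of index $4$, so Lemma \ref{LHS} gives $\Res\colon H^*(T)\xrightarrow{\ \sim\ }H^*(\langle b\rangle;\F_3)^{T}$; since $T$ acts on $\langle b\rangle$ through $T/N\cong\Z_2$ by inversion, this invariant ring is $(\Lambda(x)\otimes\F_3[y])^{\Z_2}=\F_3[y^2]\oplus\F_3[y^2]\,xy$, the same algebra $\F_3[u]\otimes\Lambda(v)$ ($\deg u=4$, $\deg v=3$, $v^2=0$) that appeared for $D_{12}$.

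Next I would describe each $H^*(H_i)$ as a module over $\gamma_1(H^*(T))$. For $p=2$ the centralizers $C_T(a^2)=T$ and $C_T(a)=C_T(a^3)=M$ all restrict isomorphically from $H^*(T)$, so they are free of rank one and contribute only the degree-zero generators $C_2,C_5,C_6$; for $N$ one has $H^*(N;\F_2)\cong H^*(\langle a^2\rangle;\F_2)=\F_2[t]$, and the restriction $H^*(T)\to H^*(N)$ is the standard map $H^*(\Z_4;\F_2)\to H^*(\Z_2;\F_2)$, which kills $x$ and sends $y\mapsto t^2$. Hence $H^*(N)$ is free of rank two over the even part $\F_2[y]$ of $H^*(T)$ on the basis $\{1,t\}$ with $x$ acting as zero, giving generators $C_3,X_3$ and $C_4,X_4$, the Type II relations $\gamma_1(x)C_3=\gamma_1(x)C_4=\gamma_1(x)X_3=\gamma_1(x)X_4=0$, and the Type I relation $\gamma_1(x)^2=0$. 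For $p=3$ only $H_1=H_2=T$ and $H_3=H_4=N$ contribute; here $H^*(N;\F_3)=\Lambda(x)\otimes\F_3[y]$ sits over $H^*(T)=H^*(N)^{\Z_2}$ exactly as analysed for $D_{12}$, so it is presented by $1,x,y$ with relations $v\cdot x=0$ and $u\cdot x=v\cdot y$, yielding the same two-term free presentation. For both primes, exactness of the proposed presentations is checked by the same Poincar\'e-series/Euler-characteristic count used for $D_{12}$ (including the identification of $\Ker\delta_1$).

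With the generators fixed, I would compute the Type III relations. In degree zero I would identify $\widehat{HH}^0$ with $\bar Z(\F_pT)$ and multiply conjugacy-class sums $\kappa_i$ directly; e.g. over $\F_2$, $\kappa_5^2=(a+ab+ab^2)^2=a^2+a^2b+a^2b^2=\kappa_2+\kappa_4$, and over $\F_3$, $\kappa_3^2=(b+b^2)^2=2+b+b^2=2\kappa_1+\kappa_3$. In positive degrees I would apply the product formula (\ref{ProductFormula}) pair by pair: for each $(i,j)$ one determines $D$ from $T=\coprod_{x\in D}H_ixH_j$, the conjugating element $y$ with $g_k={}^{y}(g_i\,{}^{x}g_j)$, the subgroup $W={}^{y}H_i\cap{}^{yx}H_j$, and the relevant conjugation, restriction and transfer maps. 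The conjugation actions are simple here ($a$ acts trivially on $H^*(\langle a^2\rangle;\F_2)$ and by $-1$ on $H^*(\langle b\rangle;\F_3)$), so the substantive input is the transfers: for $p=3$, Lemma \ref{LHS} gives $\Tr_N^T(\xi)=\xi+a^*\xi$, while for $p=2$ one needs $\Tr_N^T(t^2)=0$ (since $t^2=\Res^T_N y$ and $\Tr_N^T(1)=[T:N]\cdot1=0$) and $\Tr_{\langle a^2\rangle}^{M}(t)=x$ (the Verlagerung $M\to\langle a^2\rangle$ is surjective). These are precisely the facts that force entries such as $X_3^2=\gamma_1(y)C_3$ and $X_3C_5=\gamma_1(x)C_5$, and running every pair $(i,j)$ through the formula produces the four tables.

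I do not expect a genuine conceptual obstacle: the $D_{12}$ machinery transfers wholesale, the $p=3$ module theory for $T$ is identical to it, and only the double-coset data and the degree-zero algebra $\bar Z(\F_pT)$ change. The main labour — and the likeliest source of error — is the purely mechanical part: enumerating the double cosets $H_i\backslash T/H_j$ and tracking the conjugation actions and transfer maps through every entry of the two $\F_2T$ tables and the two $\F_3T$ tables, together with the bookkeeping needed to confirm that the proposed free presentations of the $H^*(H_i)$ are exact.
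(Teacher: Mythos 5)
Your proposal follows essentially the same route as the paper: the same conjugacy-class/centralizer data (your representative $a^3$ with centralizer $\langle a\rangle$ is a harmless substitute for the paper's $a^3b$ with centralizer $\langle a^3b\rangle$), the same identifications of $H^*(T)$ and $H^*(N)$ via Lemmas \ref{ResComp} and \ref{LHS}, the same free presentations of the $H^*(H_i)$ checked by Poincar\'e series, and the same use of the product formula with the key transfer facts $\Tr_N^T(1)=\Tr_N^T(\Res u)=0$, $\Tr_N^T(t)=v$ in characteristic $2$ and $\Tr_N^T(\xi)=\xi+a^*\xi$ in characteristic $3$. The remaining work you defer is exactly the bookkeeping the paper also describes as ``laborious but straightforward,'' so I see no gap.
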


\begin{proof}
Write $T = \lr{a,b|a^4=b^3=1,aba^{-1}=b^{-1}}$ and $N = \lr{a^2,b}$ so that $T$ has conjugacy class representatives $\{g_i\} = \{1,a^2,b,a^2b,a,a^3b\}$ with corresponding centralizers $\{H_i\} = \{T,T,N,N,\lr{a},\lr{a^3b}\}$ with orders $\{|H_i|\} = \{12,12,6,6,4,4\}$.

$(\textit{p = 2})$ Lemma \ref{ResComp} implies that $\Res : H^*(T) \rightarrow H^*(P)$ is an isomorphism whenever $P \in \Syl_2(T)$. So $H^*(T)$ is generated by elements $v$ and $u$ of degrees 1 and 2, respectively, subject to the relation $v^2 = 0$. Also, $H^*(N) \simeq H^*(\lr{a^2})$ is generated by an element $x$ with degree 1. As usual, $H^*(P)$ is the free $H^*(T)$-module with basis 1. Since $\Res : H^*(\lr{a}) \rightarrow H^*(\lr{a^2})$ is zero in odd degrees and nonzero in even degrees, we see that $\Res : H^*(T) \rightarrow H^*(N)$ maps $v$ to 0 and $u$ to $x^2$. Hence, $H^*(N)$ is generated as an $H^*(T)$-module by $1$ and $x$. So letting $B[n]_* = H^*(T)[n]$ denote the free graded $H^*(T)$-module of rank 1, we have a graded complex of $H^*(T)$-modules given by

$$\xymatrix{B[1] \oplus B[2] \ar[r]^{\delta_1} & B[0] \oplus B[1] \ar[r]^(.6){\delta_0} & H^*(N) \ar[r] & 0}$$

where

\begin{equation*}
\begin{split}
\delta_0(\alpha_1,\alpha_2) &= \alpha_1.1+\alpha_2.x \\
\delta_1(\alpha_1,\alpha_2) &= \alpha_1(v,0)+\alpha_2(0,v)
\end{split}
\end{equation*}

Clearly, $\delta_0$ is surjective and $\delta_1(\alpha_1,\alpha_2) = 0$ precisely when $\alpha_1$ and $\alpha_2$ have odd degree. So if $q(t) = \sum_{i=0}^{\infty} t^i$ then $B[0] \oplus B[1]$ has corresponding Poincar\'{e} polynomial $q(t)+tq(t)$, $\text{Im}(\delta_1)$ has polynomial $tq(t)$, and $H^*(N)$ has polynomial $q(t)$, so that the above sequence is a free presentation of $H^*(N)$ as an $H^*(T)$-module. Therefore, we can choose generators of $HH'(kG)$ by $C_i = \gamma_i(1)$ for $1 \leq i \leq 6$, $V = \gamma_1(v)$, $U = \gamma_1(u)$, and $X_j = \gamma_j(x)$ for $j = 3,4$. The only Type I relation is $V^2 = 0$, the Type II relations are $VC_3 = VC_4 = VX_3 = VX_4 = 0$, and the Type III relations in degree zero are handled in the usual way. The Type III relations in positive degree are obtained by using (\ref{ProductFormula}). For instance, we have the following:

$$\gamma_3(\alpha) \smile \gamma_2(\beta) = \gamma_4(\alpha \smile \Res^T_H(\beta))$$
$$\gamma_3(\alpha) \smile \gamma_3(\beta) = \gamma_3(a^*\alpha \smile a^*\beta) + \gamma_1(\Tr_N^T(\alpha \smile a^*\beta))$$
$$\gamma_3(\alpha) \smile \gamma_4(\beta) = \gamma_4(a^*\alpha \smile a^*\beta) + \gamma_2(\Tr_N^T(\alpha \smile a^*\beta))$$
$$\gamma_4(\alpha) \smile \gamma_4(\beta) = \gamma_3(a^*\alpha \smile a^*\beta) + \gamma_1(\Tr_N^T(\alpha \smile a^*\beta))$$

From $\Tr_N^T = \Tr_P^T \Tr_Z^P \Res^N_Z$ we obtain $\Tr_N^T(1) = \Tr_N^T(y) = 0$ and $\Tr_N^T(x) = v$. Also, $a^*\alpha = \alpha$ for $\alpha \in H^*(Z)$ since $Z \leq ZT$. The computations are now laborious but straightforward.

$(\textit{p = 3})$ Lemma \ref{LHS} implies that $\Res : H^*(T) \rightarrow H^*(N)^T$ is an isomorphism. So $H^*(T)$ is generated by elements $v$ and $u$ of degrees 3 and 4, respectively. We may choose generators $x$ and $y$ of $H^*(N)$ with degrees 1 and 2, respectively, for which $\Res(v) = xy$ and $\Res(u) = y^2$. Then $H^*(N)$ is generated by 1, $x$, and $y$ as an $H^*(T)$-module, with relations $v.x = 0$ and $u.x = v.y$. In fact, this provides a free presentation of $H^*(N)$ as an $H^*(T)$-module. We obtain generators of $HH'(kG)$ by defining $C_i = \gamma_i(1)$ for $1 \leq i \leq 4$, $V = \gamma_1(v)$, $U = \gamma_1(u)$, $X_j = \gamma_j(x)$, and $Y_j = \gamma_j(y)$ for $j = 3,4$. There are no nontrivial Type I relations, and the Type II relations are given by $VX_3 = VX_4 = 0$, $UX_3 = VY_3$, and $UX_4 = VY_4$. Type III relations are obtained by the same means as for $p = 2$. In fact, (\ref{ProductFormula}) remains valid regardless of the characteristic of the field. For these computations it useful to have the following table at hand:

\begin{table}[here]
\begin{center}
\begin{tabular}{c|ccccc}
  $\xi$ & 1 & $x$ & $y$ & $xy$ & $y^2$ \\
  \hline
  $\Tr_N^T(\xi)$ & 2 & 0& 0 & $2v$ & $2u$ \\
\end{tabular}
\end{center}
\end{table}

The proof is now complete.
\end{proof}

We include one last computation which is interesting in that it requires few detailed multiplications.

\begin{prop}$HH'(\F_3 \text{SL}_2(3))$ has generators $C_1,C_2,C_3,C_4,C_5,C_6,V$, and $U$ of degrees 0,0,0,0,0,0,1, and 2, respectively, subject to the relations given by the following table:

\begin{table}[h!b!p!]
\begin{center}
\begin{tabular}{c|ccccc}
  & $C_2$ & $C_3$ & $C_4$ & $C_5$ & $C_6$ \\
  \hline
  $C_2$ & $C_1$ & $C_6$ & $C_5$ & $C_4$ & $C_3$ \\
  $C_3$ & $C_6$ & $C_4$ & $C_1$ & $C_2$ & $C_5$ \\
  $C_4$ & $C_5$ & $C_1$ & $C_3$ & $C_6$ & $C_2$ \\
  $C_5$ & $C_4$ & $C_2$ & $C_6$ & $C_3$ & $C_1$ \\
  $C_6$ & $C_3$ & $C_5$ & $C_2$ & $C_1$ & $C_4$ \\
\end{tabular}
\end{center}
\end{table}

\end{prop}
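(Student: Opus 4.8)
\emph{Proof proposal.} The strategy is to observe that for every conjugacy class representative $g_i$ of $G := \text{SL}_2(3)$ whose centralizer $H_i = C_G(g_i)$ has order divisible by $3$, the restriction map $\Res^G_{H_i} : H^*(G) \to H^*(H_i)$ is an isomorphism. Granting this, the entire positive-degree structure of $HH'(\F_3\text{SL}_2(3))$ is forced: the algebra is free as a module over the subalgebra $\gamma_1(H^*(G))$ on the degree-zero classes, there are no Type I relations beyond graded-commutativity and no Type II relations, and every Type III relation occurs in degree zero. Thus the only real content of the statement is the multiplication table of the commutative algebra $\bar{Z}(\F_3\text{SL}_2(3))$; this is what is meant by ``few detailed multiplications''.

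First I would set up the group theory. Since $G = \text{SL}_2(3) \cong Q_8 \rtimes \Z_3$ has the normal $3$-complement $Q_8$, Lemma \ref{ResComp} gives $H^*(G) \cong H^*(P) = H^*(\Z_3) = \F_3[u] \otimes \Lambda(v)$ with $\deg v = 1$ and $\deg u = 2$ (the relation $v^2 = 0$ is already implied by graded-commutativity over $\F_3$). Now $G$ has seven conjugacy classes: those of $1$, of $-I$, of an element of order $4$, two classes of elements of order $3$, and two of elements of order $6$, with centralizers $G, G, \Z_4, \Z_6, \Z_6, \Z_6, \Z_6$ respectively. Exactly the six classes whose centralizer has order divisible by $3$ contribute to $HH'(\F_3 G)$. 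For the four centralizers $\cong \Z_6$: each contains a Sylow $3$-subgroup $P$ of $G$, so $\Res^G_P$ factors as $\Res^{\Z_6}_P \circ \Res^G_{\Z_6}$; since $\Res^{\Z_6}_P$ is an isomorphism (Lemma \ref{LHS}, as $P$ is normal of index prime to $3$ in the abelian group $\Z_6$) and $\Res^G_P$ is an isomorphism (Lemma \ref{ResComp}), the map $\Res^G_{\Z_6}$ is an isomorphism. Together with the trivial case $H_i = G$ this establishes the key claim.

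Next I would extract the generators and relations. Set $C_i = \gamma_i(1)$ for $1 \le i \le 6$, $V = \gamma_1(v)$, $U = \gamma_1(u)$. Since $\gamma_1$ is an injective algebra homomorphism (by the product formula with $i=j=1$), $\gamma_1(H^*(G))$ is the subalgebra generated by $U$ and $V$, with $C_1 = \gamma_1(1)$ its identity, which is the identity of $HH'(\F_3 G)$. From $\gamma_1(\beta) \smile C_i = \gamma_i(\Res^G_{H_i}\beta)$, surjectivity of $\Res^G_{H_i}$ gives $\gamma_i(H^*(H_i)) = \gamma_1(H^*(G)) \smile C_i$; hence $HH'(\F_3 G) = \bigoplus_{i=1}^{6} \gamma_1(H^*(G)) \smile C_i$ is free as a $\gamma_1(H^*(G))$-module on $C_1, \dots, C_6$. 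In terms of the Type I, Type II, and Type III relations described above, this means there are no nontrivial Type I relations and no Type II relations, while every Type III relation is a product $C_i C_j$, which lies in $HH'^0(\F_3 G) = \bar{Z}(\F_3\text{SL}_2(3))$.

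Finally I would compute $\bar{Z}(\F_3\text{SL}_2(3))$, which is the only real obstacle. By Proposition \ref{TateZero}, $\bar{Z}(\F_3 G) = Z(\F_3 G)/\Tr_1^G(\F_3 G)$ has the images of the class sums $\kappa_i$ with $3 \mid |H_i|$ as a basis, and $\Tr_1^G(\F_3 G)$ is spanned by the class sum of the order-$4$ class — the unique class with centralizer of order prime to $3$ — so that class sum vanishes in $\bar{Z}$. It then remains only to multiply the six relevant class sums pairwise in $Z(\F_3 G)$, equivalently to read off the relevant class-algebra structure constants of $\text{SL}_2(3)$, and reduce modulo $3$ and modulo the order-$4$ class sum. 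Carrying this out, one finds that $C_1, \dots, C_6$ are closed under multiplication and form a group isomorphic to $\Z_6$, so that $\bar{Z}(\F_3\text{SL}_2(3)) \cong \F_3\Z_6$; translating this group law into the chosen labelling yields precisely the table in the statement, completing the proof.
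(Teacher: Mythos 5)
Your proposal is correct and follows essentially the same route as the paper: use Lemma \ref{ResComp} to identify $H^*(\mathrm{SL}_2(3))$ with $H^*(\Z_3)$, observe that restriction to each of the six relevant centralizers is an isomorphism so that each $H^*(H_i)$ is free of rank one over $\gamma_1(H^*(G))$, and reduce everything to the degree-zero Type III relations computed in $\bar{Z}(\F_3\mathrm{SL}_2(3))$. Your extra observation that the resulting table is the multiplication table of $\Z_6$ (so $\bar{Z}(\F_3\mathrm{SL}_2(3)) \cong \F_3\Z_6$) is consistent with the stated table and is a pleasant refinement the paper leaves implicit.
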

\begin{proof}
Recall that $|\text{SL}_2(3)| = 24$ and the Sylow 2-subgroup of $\text{SL}_2(3)$ is normal. Thus, Lemma \ref{ResComp} implies that $\Res : H^*(\text{SL}_2(3)) \rightarrow H^*(P)$ is an isomorphism for all $P \in \Syl_3(\text{SL}_2(3))$. In particular, $H^*(\text{SL}_2(3))$ has generators $v$ and $u$ of degrees 1 and 2, respectively. Direct computation shows that $\text{SL}_2(3)$ has conjugacy class representatives $\{g_i\}$ given as

\begin{equation*}
\begin{pmatrix}
1 & 0 \\
0 & 1
\end{pmatrix}
\hspace{10 pt}
\begin{pmatrix}
2 & 0 \\
0 & 2
\end{pmatrix}
\hspace{10 pt}
\begin{pmatrix}
0 & 1 \\
2 & 2
\end{pmatrix}
\hspace{10 pt}
\begin{pmatrix}
2 & 2 \\
1 & 0
\end{pmatrix}
\hspace{10 pt}
\begin{pmatrix}
0 & 1 \\
2 & 1
\end{pmatrix}
\hspace{10 pt}
\begin{pmatrix}
1 & 2 \\
1 & 0
\end{pmatrix}
\hspace{10 pt}
\begin{pmatrix}
0 & 1 \\
2 & 0
\end{pmatrix}
\end{equation*}

with centralizers $H_1 = H_2 = \text{SL}_2(3)$ and $H_i = \lr{g_i,2I}$ for $3 \leq i \leq 7$. So $H_i$ is abelian for $3 \leq i \leq 7$, $|H_i| = 6$ for $3 \leq i \leq 6$, and $|H_7| = 4$. In particular, if $P_i$ denotes a Sylow 3-subgroup of $H_i$ then $\Res : H^*(H_i) \rightarrow H^*(P_i)$ is an isomorphism for $2 \leq i \leq 6$, so that $H^*(H_i)$ is free as an $H^*(\text{SL}_2(3))$-module with basis 1. Therefore, $HH'(\F_3\text{SL}_2(3))$ has generators defined as $C_i = \gamma_i(1)$ for $1 \leq i \leq 6$, $V = \gamma_1(v)$, and $U = \gamma_1(u)$. The only nontrivial relations are of Type III in degree zero, which are straightforward to obtain.
\end{proof}





\section{Acknowledgments}

Most of this work formed a part of my Ph.D. thesis conducted at the University of Chicago under the direction of J. L. Alperin. I would like to thank Alperin for recommending these problems to me and for providing me with constant encouragement in their investigation. I also wish to thank the authors of MAGMA, a program that has proved quite useful in performing various mathematical computations.

\bibliographystyle{plain}
\bibliography{AAAbibfile}

\end{document}